\newcommand{\vb}{{\mathbf{b}}}
\newcommand{\vg}{{\mathbf{g}}}
\newcommand{\vk}{{\mathbf{k}}}
\newcommand{\vq}{{\mathbf{q}}}
\newcommand{\vr}{{\mathbf{r}}}
\newcommand{\vs}{{\mathbf{s}}}
\newcommand{\vt}{{\mathbf{t}}}
\newcommand{\vu}{{\mathbf{u}}}
\newcommand{\vv}{{\mathbf{v}}}
\newcommand{\vw}{{\mathbf{w}}}
\newcommand{\vx}{{\mathbf{x}}}
\newcommand{\vy}{{\mathbf{y}}}
\newcommand{\vz}{{\mathbf{z}}}
\newcommand{\vA}{{\mathbf{A}}}
\newcommand{\vB}{{\mathbf{B}}}
\newcommand{\vC}{{\mathbf{C}}}
\newcommand{\vI}{{\mathbf{I}}}
\newcommand{\vL}{{\mathbf{L}}}
\newcommand{\vT}{{\mathbf{T}}}
\newcommand{\cC}{{\mathcal{C}}}
\newcommand{\cG}{{\mathcal{G}}}
\newcommand{\cH}{{\mathcal{H}}}
\newcommand{\cP}{{\mathcal{P}}}
\newcommand{\vzero}{\mathbf{0}}
\newcommand{\dom}{{\mathrm{dom}}} 
\DeclareMathOperator*{\argmin}{arg\,min}
\newcommand{\bc}{\begin{center}}
\newcommand{\ec}{\end{center}}
\newcommand{\bdm}{\begin{displaymath}}
\newcommand{\edm}{\end{displaymath}}
\newcommand{\beq}{\begin{equation}}
\newcommand{\eeq}{\end{equation}}
\newcommand{\bfl}{\begin{flushleft}}
\newcommand{\efl}{\end{flushleft}}
\newcommand{\bt}{\begin{tabbing}}
\newcommand{\et}{\end{tabbing}}
\newcommand{\beqn}{\begin{eqnarray}}
\newcommand{\eeqn}{\end{eqnarray}}
\newcommand{\beqs}{\begin{align*}} 
\newcommand{\eeqs}{\end{align*}}  
\newcommand{\prox}{{\bf prox}}
\DeclareMathOperator*{\Min}{minimize}
\DeclareMathOperator*{\Max}{maximize}
\newcommand{\st}{\mathrm{subject~to}}
\newtheorem{theorem}{Theorem}
\newtheorem{assumption}{Assumption}
\newtheorem{definition}{Definition}
\newtheorem{corollary}{Corollary}
\newtheorem{remark}{Remark}
\newtheorem{lemma}{Lemma}
\newtheorem{proposition}{Proposition}
\begin{document}

\title{Self Equivalence of the Alternating Direction Method of Multipliers}

\author{Ming Yan\thanks{Department of Mathematics, University of California, Los Angeles, CA 90095, USA. Emails: \texttt{yanm@math.ucla.edu} and \texttt{wotaoyin@math.ucla.edu} }\and Wotao Yin$^\ast$}
\date{\today}

\maketitle

\begin{abstract}
The alternating direction method of multipliers (ADM or ADMM) breaks a complex optimization problem into  much simpler subproblems. The ADM algorithms are typically short and easy to implement yet exhibit (nearly) state-of-the-art performance for large-scale optimization problems. 

To apply ADM, we first formulate a given problem into the ``ADM-ready" form, so the final algorithm depends on the formulation. A problem like $\Min_\vx u(\vx) + v(\vC\vx)$ has six different ``ADM-ready" formulations. They can be in the primal or dual forms, and they differ by how dummy variables are introduced. To each  ``ADM-ready" formulation, ADM can be applied in two different orders depending on how the primal variables are updated. Finally, we get twelve different ADM algorithms! How do they compare to each other? Which algorithm should one choose?  

In this chapter, we show that many of the  different ways of applying ADM are equivalent. Specifically, we show that ADM applied to a primal formulation is equivalent to ADM applied to  its Lagrange dual; ADM is equivalent to a primal-dual algorithm applied to the saddle-point formulation of the same problem. These results are surprising since the primal and dual variables in ADM are seemingly treated very differently, and some previous work exhibit preferences in one over the other on specific problems. In addition, when one of the two objective functions is quadratic, possibly subject to an affine constraint, we show that swapping the update order of the two primal variables in ADM gives the same algorithm. These results identify the few truly different ADM algorithms   for a problem,  which generally have different forms of subproblems from which it is easy to pick one with the most computationally friendly subproblems.
\end{abstract}

\textbf{Keywords: }alternating direction method of multipliers, ADM, ADMM, Douglas-Rachford splitting (DRS), Peaceman-Rachford splitting (PRS),  primal-dual algorithm

\section{Introduction}
The Alternating Direction Method of Multipliers (ADM or ADMM)  is a very popular algorithm with wide applications in signal and image processing, machine learning, statistics, compressive sensing, and operations
research. Combined with problem reformulation tricks, the method can reduce a complicated problem into much simpler subproblems.

The vanilla ADM applies to a linearly-constrained  problem with separable convex objective functions in the following ``ADM-ready" form:
\begin{align}\tag{P1}\label{for:P3-ADMM}
\left\{\begin{array}{ll}\Min\limits_{\vx,\vy} \quad& f(\vx)+g(\vy)\\
\st \quad& \vA\vx+\vB\vy=\vb,\end{array}\right.
\end{align}
where functions $f,g$ are proper, closed, convex  but not necessarily differentiable. ADM reduces~\eqref{for:P3-ADMM} into  two simpler subproblems  and  then iteratively updates $\vx$, $\vy$, as well as a multiplier (dual) variable $\vz$. 
Given  $(\vx^k,\vy^k,\vz^k)$, ADM generates  $(\vx^{k+1},\vy^{k+1},\vz^{k+1})$ as follows 
\begin{enumerate}
\item 
$\vy^{k+1} \in \argmin\limits_\vy g(\vy) +(2\lambda)^{-1}\|\vA\vx^k+\vB\vy-\vb+\lambda\vz^k\|_2^2,$\\[-15pt]
\item $\vx^{k+1} \in \argmin\limits_\vx f(\vx)+(2\lambda)^{-1}\|\vA\vx+\vB\vy^{k+1}-\vb+\lambda\vz^k\|_2^2,$\\[-15pt]
\item $\vz^{k+1} = \vz^k+\lambda^{-1}(\vA\vx^{k+1}+\vB\vy^{k+1}-\vb),$
\end{enumerate}
where $\lambda>0$ is a fixed parameter.
We use ``$\in$'' since the subproblems do not necessarily have unique solutions. 

Since $\{f,\vA,\vx\}$ and $\{g,\vB,\vy\}$ are in symmetric positions in \eqref{for:P3-ADMM}, swapping them does not change the problem. This corresponds to switching the order that $\vx$ and $\vy$ are updated in each iteration. But, since the variable updated first is used in the updating of the other variable, this swap leads to a different  sequence of  variables and thus a different algorithm. 

Note that the order switch does not change the per-iteration cost of ADM. Also note that one, however, cannot mix the two update orders at different iterations because it  will generally cause divergence, even when the primal-dual solution to \eqref{for:P3-ADMM} is unique.

\subsection{ADM works in many different ways}\label{sec:11}
In spite of  its popularity and vast literature,  there are still simple unanswered questions about ADM: how many ways can ADM be applied? and which ways work better? Before answering these questions, let us  examine  the following problem, to which we can find \textbf{twelve different ways to apply ADM:}  \beq\label{uv}\Min_{\vx}~ u(\vx) + v(\vC\vx),
\eeq
where  $u$ and $v$ are proper, closed, convex functions and $\vC$ is a linear mapping. Problem \eqref{uv} generalizes a large number of signal and image processing, inverse problem, and machine learning models. 

We shall reformulate 
\eqref{uv} into the form of \eqref{for:P3-ADMM}.  By introducing dummy variables in two different ways,  we obtain two  ADM-ready formulations of  problem \eqref{uv}: 
\begin{align}\label{uveq}
\left\{\begin{array}{ll}\Min\limits_{\vx,\vy} \quad& u(\vx)+v(\vy)\\
\st \quad& \vC\vx-\vy=0\end{array} \right.\quad\mbox{and}
&\qquad\left\{\begin{array}{ll}\Min\limits_{\vx,\bar{\vy}} \quad& u(\vx)+v(\vC\bar{\vy})\\
\st \quad& \vx-\bar{\vy}=0.\end{array} \right.
\end{align}
In addition, we can derive the dual problem of \eqref{uv}:
\beq\label{uvdual}
\Min_\vv~ u^*(-\vC^* \vv) + v^*(\vv),
\eeq
where $u^*,v^*$ are the convex conjugates (i.e., Legendre transforms) of functions $u,v$, respectively, $\vC^*$ is the adjoint of $\vC$, and $\vv$ is  the  dual variable. (The steps to derive \eqref{uvdual} from \eqref{uv} are standard and thus omitted.) Then, we also reformulate  \eqref{uvdual} into two ADM-ready forms, which use different dummy variables:
\begin{align}\label{uvdualeq}
\left\{\begin{array}{ll}\Min\limits_{\vu,\vv} \quad& u^*(\vu)+v^*(\vv)\\
\st \quad& \vu+\vC^* \vv=0\end{array} \right.\quad\mbox{and}
&\qquad\left\{\begin{array}{ll}\Min\limits_{\bar\vu,\vv} \quad& u^*(\vC^*\bar{\vu})+v^*(\vv)\\
\st \quad& \bar{\vu}+\vv=0.\end{array} \right.
\end{align}
Clearly, ADM can be applied to all of the four formulations in \eqref{uveq} and \eqref{uvdualeq}, and including the update order swaps, there are \emph{eight different ways} to apply ADM. 

Under some technical conditions such as the existence of saddle-point solutions, all the eight ADM will converge to a saddle-point solution or solutions for problem \eqref{uv}.   In short, they all work.

It is worth noting that by the Moreau identity, the subproblems involving $u^*$ and $v^*$ can be easily reduced to subproblems involving $u$ and $v$, respectively. No significant computing is required.

The two formulations in \eqref{uveq}, however, lead to significantly different ADM subproblems. In the ADM applied to the left formulation, $u$ and $\vC$ will appear in one subproblem and $v$ in the other subproblem. To the right formulation, $u$ will be alone while $v$ and $\vC$ will appear in the same subproblem. This difference applies to the two formulations in \eqref{uvdualeq} as well. It depends on the structures of $u,v,\vC$ to determine the better choices. Therefore, out of the eight, four will have   (more) difficult subproblems than the rest.

There are \emph{another four ways} to apply ADM to problem \eqref{uv}. Every one of them will have three  subproblems that separately involve $u,v,\vC$, so they are all different from the above eight.
 To get the first two, let us take the left formulation in \eqref{uveq} and introduce a dummy variable $\vs$,  obtaining  a new equivalent formulation
\beq\label{xyz} \left\{\begin{array}{ll}\Min\limits_{\vx,\vy,\vs} \quad& u(\vs)+v(\vy)\\
\st \quad& \vC\vx-\vy=0,\\
& \hspace{8pt}\vx-\vs\,=0. \end{array} \right.
\eeq
It turns out that the same ``dummy variable'' trick applied to the right formulation in \eqref{uveq} also gives  \eqref{xyz}, up to  a change of variable names. Although there are three variables, we can group $(\vy,\vs)$ and treat $\vx$ and $(\vy,\vs)$ as  the two variables. Then problem \eqref{xyz}  has the form \eqref{for:P3-ADMM}. Hence, we have two ways to apply ADM to  \eqref{xyz} with two different update orders. Note that $\vy$ and $\vs$ do not appear together in any equation or function, so  the ADM subproblem that updates $(\vy,\vs)$ will further decouple to two separable subproblems of $\vy$ and $\vs$; in other words, the resulting ADM has three subproblems   involving $\{\vx,\vC\}$ ,$\{\vy,v\}$, $\{\vs,u\}$ separately.  
The other two ways are results of the same ``dummy variable'' trick  applied to the either formulation in  \eqref{uvdualeq}. Again, since now $\vC$  has its own subproblem, these four ways are distinct from the previous eight ways.

As demonstrated through an example, there are quite many  ways to formulate the same optimization problem into ``ADM-ready" forms and obtain different ADM algorithms. 
While most ADM users  choose just one way without  paying much attention to the other choices,  some   show preferences toward  a specific formulation. For example, some prefer \eqref{xyz} over those in \eqref{uveq} and \eqref{uvdualeq} since $\vC$, $u$, $v$ all end up in separate subproblems. 
When applying ADM to certain $\ell_1$ minimization problems, the authors of~\cite{Yall1,yang2013dual} emphasize on the dual formulations, and later the authors of~\cite{Xiao2013Primal} show a preference over the primal formulations. When  ADM was proposed to solve a traffic equilibrium problem, it was first applied to the dual formulation in~\cite{Gabay1983} and, years later, to the primal formulation in~\cite{Fukushima1996The}. Regarding which one of the two variables should be updated first in ADM, neither a rule nor an equivalence claim is found in the literature. 
Other than giving preferences to ADM with simpler  subproblems, there is no results that compare the different formulations.

\subsection{Contributions}
This chapter shows that, applied to certain pairs of  different formulations of the same problem, ADM  will generate  equivalent sequences of variables that can be mapped exactly from one to another at every iteration. Specifically, between the sequence of an ADM algorithm on a primal formulation and that on the corresponding dual formulation, such maps exist. 

We also show that whenever at least one of $f$ and $g$ is a quadratic  function (including affine function as a special case), possibly subject to an affine constraint,  the sequence of an ADM algorithm can be mapped to that of the ADM algorithm using the opposite order for updating their variables. 

Abusing the word ``equivalence'', we say that ADM has  ``primal-dual equivalence'' and ``update-order equivalence (with a quadratic objective function).'' Equivalent ADM algorithms take the same number of iterations to reach the same accuracy. (However, it is possible that one algorithm is slightly better than the other in terms of numerical stability, for example, against round-off errors.) 

Equipped with these equivalence results, the first eight ways to apply ADM to problem~\eqref{uv} that were discussed in section \ref{sec:11} are reduced to four ways in light of primal-dual equivalence, and the four will  further reduce to two whenever $u$ or $v$, or both,  is a quadratic function. 

The last four ways to apply ADM on problem~\eqref{uv} discussed in section \ref{sec:11}, which yield three subproblems that separately involve $u$, $v$, and $\vC$, are all equivalent and reduce to just one due to primal-dual equivalence and one variable in them is associated with 0 objective  (for example, variable $\vx$ has 0 objective in problem \eqref{xyz}). 

Take the $\ell_p$-regularization problem,  $p\in [1,\infty]$, 
\beq\label{lassoprb}
\Min_{\vx}~ \|\vx\|_p + f(\vC\vx)
\eeq
as an example, which is special case of problem~\eqref{uv} with a quadratic function $u$ when $p=2$. We list its three  different  formulations, whose ADM algorithms are truly different,  as follows. When $p\not=2$ \emph{and} $f$ is non-quadratic, each of the first two formulations leads to a pair of different ADM algorithms with different orders of variable update; otherwise, each pair of algorithms is equivalent.
\begin{enumerate}
\item Left formulation of \eqref{uveq}: 
$$\left\{\begin{array}{ll}\Min\limits_{\vx,\vy} \quad& \|\vx\|_p+f(\vy)\\
\st \quad& \vC\vx-\vy=0.\end{array} \right.$$
The subproblem for $\vx$ involves $\ell_p$-norm and $\vC$. The other one for $\vy$ involves $f$.
\item Right formulation of \eqref{uveq}:
$$\left\{\begin{array}{ll}\Min\limits_{\vx,\vy} \quad& \|\vx\|_p+f(\vC\vy)\\
\st \quad& \vx-\vy=0.\end{array} \right.$$
The subproblem for $\vx$ involves $\ell_p$-norm and, for $p=1$ and $2$, has a closed-form solution. The other subproblem for $\vy$ involves $f(\vC\cdot)$.
\item Formulation \eqref{xyz}: for any $\mu>0$,
$$ \left\{\begin{array}{ll}\Min\limits_{\vx,\vy,\vs} \quad& \|\vs\|_p+f(\vy)\\
\st \quad& \vC\vx-\vy=0,\\
& \hspace{0pt}\mu(\vx-\vs)=0. \end{array} \right.$$
The subproblem for $\vx$ is quadratic program involving $\vC^*\vC+\mu\vI$. The subproblem for $\vs$ involves $\ell_p$-norm. The  subproblem for $\vy$ involves $f$. The subproblems for $\vs$ and $\vy$ are independent.
\end{enumerate}
The best choice depends on which has the simplest subproblems.

The result of ADM's primal-dual equivalence is surprising for three reasons. Firstly, ADM iteration updates \emph{two} primal variable, $\vx^k$ and $\vy^k$ in \eqref{for:P3-ADMM} and \emph{one} dual variable, all in different manners. The updates to the primal variables are done in a Gauss-Seidel manner and involve minimizing functions $f$ and $g$, but the update to the dual variable is explicit and linear. Surprisingly, ADM actually treats one of the two primal variables and the dual variable equally as we will later show. Secondly, most literature describes ADM as an inexact version of the Augmented Lagrangian Method (ALM) \cite{ALM}, which updates $(\vx,\vy)$ together rather than one after another. Although ALM maintains the primal variables,  under the hood ALM   is  the dual-only proximal-point algorithm that iterates  the dual variable. It is commonly believed that ADM is an inexact dual algorithm. Thirdly, primal and dual problems typically have different sizes and regularity properties, causing the same algorithm, even if it is applicable to both, to exhibit different performance. For example, the primal and dual variables may have different dimensions. If the primal function $f$ is Lipschitz differentiable, the dual function $f^*$ is strongly convex but can be non-differentiable, and vice versa. Such primal-dual differences often mean that it is numerically advantageous to solve one rather than the other, yet our result means that there is no such primal-dual difference on ADM.

Our maps between equivalent ADM sequences have very simple forms, as the reader will see below. Besides the technical proofs that establish the maps, it is interesting to mention the operator-theoretic perspective of our results. It is shown in \cite{Gabay1983} that the dual-variable sequence of ADM coincides with  a sequence of the Douglas-Rachford splitting (DRS) algorithm~\cite{Douglas1956on,Lions1979spliting}. Our ADM's primal-dual equivalence can be obtained through the above ADM--DRS relation and the Moreau identity: $\prox_h + \prox_{h^*}=\vI$, applied to the proximal maps of $f$ and $f^*$ and  those of $g$ and $g^*$. The details are omitted in this chapter. Here, $\prox_{h}(x) := \argmin_s h(s)+\frac{1}{2}\|s - x\|^2$. 


Our results of primal-dual equivalence for ADM extends to the Peaceman-Rachford splitting (PRS) algorithm. Let  the PRS operator~\cite{Peaceman1955the} be denoted as $\vT_{\mathrm{PRS}} = (2\prox_{f}-\vI)\circ(2\prox_{g}-\vI)$. The DRS operator is the average of the identity map and the PRS operator: $\vT_{\mathrm{DRS}}=\frac{1}{2}\vI+\frac{1}{2}\vT_{\mathrm{PRS}}$, and the Relaxed PRS (RPRS) operator is a weighted-average: $\vT_{\mathrm{RPRS}}=(1-\alpha)\vI+\alpha \vT_{\mathrm{PRS}}$, where $\alpha\in(0,1]$. The DRS and PRS algorithms that iteratively apply their operators to find a fixed point were originally proposed for evolving PDEs with two spatial dimensions in the 1950s and then extended to finding a root of the sum of two maximal monotone (set-valued) mappings by Lions and Mercier \cite{Lions1979spliting}. Eckstein showed, in~\cite[Chapter~3.5]{eckstein1989splitting}, that DRS/PRS applied to the primal problem~\eqref{uv} is equivalent to DRS/PRS applied to the dual problem~\eqref{uvdualeq} when $\vC=\vI$. We will show that RPRS applied to~\eqref{uv} is equivalent to RPRS applied to~\eqref{uvdual} for all $\vC$.



In addition to the aforementioned primal-dual and update-order equivalence, we obtain a primal-dual algorithm for the saddle-point formulation of \eqref{for:P3-ADMM} that is also equivalent to the ADM. This primal-dual algorithm is generally \emph{different} from the primal-dual algorithm proposed by Chambolle and Pock~\cite{chambolle2011first}, while they become the same in a special case. The connection between these two algorithms will be explained.

Even when using the same number of dummy variables, truly different ADM algorithms  can have different iteration complexities (do not confuse them with the difficulties of their subproblems). The convergence analysis of ADM, such as conditions for sublinear or linear convergence, involves many different scenarios \cite{DengYin2012a,DavisYin2014,DavisYin2014b}. The discussion of convergence rates of ADM algorithms is beyond the scope of this chapter. Our focus is on the equivalence.

\subsection{Organization} 
This chapter is organized as follows. Section~\ref{sec:notations} specifies our notation, definitions, and basic assumptions. The three equivalence results for ADM are shown in sections~\ref{sec:ADM},~\ref{sec:ADM-PD}, and~\ref{sec:ADM-equal2}: The primal-dual equivalence of ADM is discussed in sections~\ref{sec:ADM}; ADM is shown to be equivalent to a primal-dual algorithm applied to the saddle-point formulation in section~\ref{sec:ADM-PD}; In section~\ref{sec:ADM-equal2}, we show 
the update-order equivalence of ADM if $f$ or $g$ is a quadratic function, possibly subject to an affine constraint. The primal-dual equivalence of RPRS is shown in section~\ref{sec:DRS}. We conclude this chapter with  the application of our results on total variation image denoising in section~\ref{sec:app1}.

\section{Notation, definitions, and assumptions}\label{sec:notations}

Let $\cH_1$, $\cH_2$, and $\cG$ be (possibly infinite dimensional) Hilbert spaces. Bold lowercase letters such as $\vx$, $\vy$, $\vu$, and $\vv$ are used for points in the Hilbert spaces. In the example of \eqref{for:P3-ADMM}, we have $\vx\in\cH_1$, $\vy\in\cH_2$, and $\vb\in\cG$. When the Hilbert space a point belongs to is clear from the context, we do not specify it for the sake of simplicity. The inner product between points $\vx$ and $\vy$ is denoted by $\langle \vx,\vy\rangle$, and $\|\vx\|_2:=\sqrt{\langle \vx,\vx\rangle}$ is the corresponding norm. $\|\cdot\|_1$ and $\|\cdot\|_\infty$ denote the $\ell_1$ and $\ell_\infty$ norms, respectively. Bold uppercase letters such as $\vA$ and $\vB$ are used for both continuous linear mappings and matrices.  $\vA^*$ denotes the adjoint of $\vA$. $\vI$ denotes the identity mapping.

If $\cC$ is a convex and nonempty set, the indicator function $\iota_\cC$ is defined as follows:
\begin{align*}
\iota_\cC(\vx)  = \left\{\begin{array}{ll}0, & \mbox{ if }\vx\in \cC,\\ \infty, &\mbox{ if } \vx\notin \cC. \end{array}\right.
\end{align*}
Both lower and upper case letters such as $f$, $g$, $F$, and $G$ are used for functions. Let $\partial f(\vx)$ be the subdifferential of function $f$ at $\vx$. The proximal operator $\prox_{f(\cdot)}$ of function $f$ is defined as
\begin{align*}
\prox_{f(\cdot)}(\vx) = \argmin_\vy f(\vy) +\frac{1}{2}\|\vy-\vx\|_2^2,
\end{align*}
where the minimization  has the unique solution.
The convex conjugate $f^*$ of function $f$ is defined as
\begin{align*}
f^*(\vv) = \sup_\vx \{\langle \vv,\vx \rangle - f(\vx)\}.
\end{align*}
Let $\vL:\cH\rightarrow \cG$, the \emph{infimal postcomposition}~\cite[Def. 12.33]{bauschke2011convex} of $f:\cH\rightarrow(-\infty,+\infty]$ by $\vL$ is given by 
\begin{align*}
\vL\triangleright f: \vs\mapsto \inf f(\vL^{-1}(\vs)) = \inf_{\vx:\vL\vx=\vs}f(\vx),
\end{align*}
with $\dom(\vL\triangleright f)=\vL(\dom(f))$. 
\begin{lemma}\label{lemma:conjuga_infimal}If $f$ is convex and $\vL$ is affine and expressed as $\vL(\cdot)=\vA\cdot+\vb$, then $\vL\triangleright f$ is convex and the convex conjugate of $\vL\triangleright f$ can be found as follows:
\begin{align*} (\vL\triangleright f)^*(\cdot) = f^*(\vA^*\cdot)+\langle\cdot,\vb\rangle. \end{align*}
\end{lemma}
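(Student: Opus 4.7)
The plan is to verify both claims by direct unpacking of the definitions, with convexity established first so that the conjugate computation makes sense in the standard sense.

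For convexity, I would argue as follows. Take $\vs_1,\vs_2\in\dom(\vL\triangleright f)$ and $\theta\in[0,1]$. For any $\varepsilon>0$, pick $\vx_1,\vx_2$ with $\vL\vx_i=\vs_i$ and $f(\vx_i)\le (\vL\triangleright f)(\vs_i)+\varepsilon$. Since $\vL$ is affine, $\vL(\theta\vx_1+(1-\theta)\vx_2)=\theta\vs_1+(1-\theta)\vs_2$, so $\theta\vx_1+(1-\theta)\vx_2$ is feasible for the infimum defining $(\vL\triangleright f)(\theta\vs_1+(1-\theta)\vs_2)$. Convexity of $f$ then gives
\[
(\vL\triangleright f)(\theta\vs_1+(1-\theta)\vs_2)\le \theta f(\vx_1)+(1-\theta)f(\vx_2)\le \theta(\vL\triangleright f)(\vs_1)+(1-\theta)(\vL\triangleright f)(\vs_2)+\varepsilon,
\]
and letting $\varepsilon\downarrow 0$ yields the desired inequality. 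This also shows that the value $-\infty$ is not relevant to the argument itself; the statement is about convexity as a (possibly extended) function.

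For the conjugate identity, I would swap the two suprema after rewriting $\inf$ as $-\sup(-\cdot)$. Starting from the definition and using $\vs=\vL\vx=\vA\vx+\vb$ as a change of variables,
\begin{align*}
(\vL\triangleright f)^*(\vy)
&=\sup_{\vs}\Bigl\{\langle \vy,\vs\rangle-\inf_{\vx:\vA\vx+\vb=\vs}f(\vx)\Bigr\}\\
&=\sup_{\vs}\sup_{\vx:\vA\vx+\vb=\vs}\bigl\{\langle \vy,\vs\rangle-f(\vx)\bigr\}\\
&=\sup_{\vx}\bigl\{\langle \vy,\vA\vx+\vb\rangle-f(\vx)\bigr\}\\
&=\sup_{\vx}\bigl\{\langle \vA^*\vy,\vx\rangle-f(\vx)\bigr\}+\langle \vy,\vb\rangle=f^*(\vA^*\vy)+\langle \vy,\vb\rangle.
\end{align*}
The only step that is not mere notation is interchanging the two suprema, which is valid because the joint supremum over $(\vs,\vx)$ subject to $\vA\vx+\vb=\vs$ equals the supremum over $\vx$ with $\vs$ eliminated by the constraint.

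There is essentially no obstacle: the argument is a standard manipulation. The only minor care is to justify that the swap of suprema is legitimate even when the inner infimum is not attained (handled by the $\varepsilon$-argument above or, equivalently, by treating the constraint as built into the joint supremum). I would not need to invoke Lemma-level duality theorems; the identity is purely formal once the definition of infimal postcomposition is unpacked.
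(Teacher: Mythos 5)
Your proposal is correct and follows essentially the same route as the paper: the conjugate identity is obtained by unpacking the definition of $\vL\triangleright f$ and collapsing the double supremum into a single supremum over $\vx$, exactly as in the paper's one-line computation. The only difference is that you additionally spell out the (standard) $\varepsilon$-argument for convexity of $\vL\triangleright f$, which the paper states but does not prove explicitly.
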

\begin{proof}Following from the definitions of convex conjugate and infimal postcomposition, we have 
\begin{align*}
(\vL\triangleright f)^*(\vv)&= \sup_{\vy}\langle \vv,\vy\rangle -\vL\triangleright f(\vy)=\sup_{\vx}\langle \vv,\vA\vx+\vb\rangle -f(\vx)\\
&=\sup_{\vx}\langle \vA^*\vv,\vx\rangle -f(\vx)+\langle \vv,\vb\rangle=f^*(\vA^*\vv)+\langle \vv,\vb\rangle.
\end{align*}
\end{proof}


\begin{definition}An algorithm on one problem is \emph{equivalent to} another algorithm on the same or another equivalent problem means that the steps in one algorithm can be recovered from the steps in another algorithm, with proper initial conditions and parameters.
\end{definition}

\begin{definition}An optimization algorithm is called \emph{primal-dual equivalent} if this algorithm applied to the primal formulation is equivalent to the same algorithm applied to its Lagrange dual.
\end{definition}
It is important to note that most algorithms are not primal-dual equivalent. ALM applied to the primal problem is equivalent to proximal point method applied to the dual problem~\cite{rockafellar1973dual}, but both algorithms are not primal-dual equivalent. In this chapter, we will show that ADM and RPRS are primal-dual equivalent.

We make the following assumptions throughout the chapter:
\begin{assumption}Functions in this chapter are assumed to be proper, closed, and convex. 
\end{assumption}
\begin{assumption}The saddle-point solutions to all the optimization problems in this chapter are assumed to exist.
\end{assumption}

\section{Equivalent problems}\label{sec:problems}

A \emph{primal formulation} equivalent to ~\eqref{for:P3-ADMM} is
\begin{align}\tag{P2}\label{for:P4-ADMM}
\left\{\begin{array}{ll}
\Min\limits_{\vs,\vt} \quad& F(\vs)+G(\vt)\\
\st \quad& \vs+\vt=\vzero,\end{array}\right.
\end{align}
where $\vs,\vt\in \cG$ and
\begin{subequations}\label{for:subprobs}
\begin{align}
F(\vs) &:= \min\limits_{\vx}f(\vx)+\iota_{\{\vx:\vA\vx=\vs\}}(\vx),\\
G(\vt) &:=\min\limits_\vy g(\vy)+\iota_{\{\vy:\vB\vy-\vb=\vt\}}(\vy).\label{for:subprobG}
\end{align}
\end{subequations}

\begin{remark}If we define $\vL_f$ and $\vL_g$ as $\vL_f(\vx)=\vA\vx$ and $\vL_g(\vy)=\vB\vy-\vb$, respectively, then 
$$F=\vL_f\triangleright f,\qquad G=\vL_g\triangleright g.$$
\end{remark}

The Lagrange dual of~\eqref{for:P3-ADMM}  is
\begin{align}\label{for:dualtoP1}
\Min_\vv  \quad f^*(-\vA^*\vv) + g^*(-\vB^*\vv)+\langle\vv,\vb\rangle,
\end{align}
which can be derived from $\Min\limits_\vv\left(-\min\limits_{\vx,\vy}L(\vx,\vy,\vv)\right)$ with the Lagrangian defined as follows:
\begin{align*}
L(\vx,\vy,\vv) = f(\vx)+g(\vy) + \langle \vv,\vA\vx+\vB\vy-\vb \rangle.
\end{align*}
An \emph{ADM-ready} formulation of ~\eqref{for:dualtoP1} is\begin{align}\tag{D1}\label{for:D3-ADMM}
\left\{\begin{array}{ll}
\Min\limits_{\vu,\vv} \quad& f^*(-\vA^*\vu)+g^*(-\vB^*\vv)+\langle \vv,\vb\rangle\\
\st \quad& \vu-\vv=\vzero.\end{array}\right.
\end{align}
When ADM is applied to an ADM-ready formulation of the Lagrange dual problem, we call it \emph{Dual ADM}. The original ADM is called \emph{Primal ADM}.

Following similar steps,  the ADM ready formulation of the Lagrange dual of~\eqref{for:P4-ADMM} is
\begin{align}\tag{D2}\label{for:D4-ADMM}
\left\{\begin{array}{ll}
\Min\limits_{\vu,\vv} \quad& F^*(-\vu)+G^*(-\vv)\\
\st \quad& \vu-\vv=\vzero.\end{array}\right.
\end{align}
The equivalence between~\eqref{for:D3-ADMM} and~\eqref{for:D4-ADMM} is trivial since
\begin{align*}
F^*(\vu)&=f^*(\vA^*\vu),\\
G^*(\vv)&=g^*(\vB^*\vv)-\langle \vv,\vb\rangle,
\end{align*}
which follows from Lemma~\ref{lemma:conjuga_infimal}.

Although there can be multiple equivalent formulations of the same problem (e.g.,~\eqref{for:P3-ADMM},~\eqref{for:P4-ADMM},~\eqref{for:dualtoP1}, and ~\eqref{for:D3-ADMM}/\eqref{for:D4-ADMM} are equivalent), an algorithm may or may not be applicable to some of them. Even when they are, on different formulations, their behaviors such as convergence and speed of convergence are different. In particular, most algorithms have different behaviors on primal and dual formulations of the same problem. An algorithm applied to a primal formulation does not dictate the behavior of the same algorithm applied to the related dual formulation. The simplex method in linear programming has different performance when applied to both the primal and dual problems, i.e., the primal simplex method starts with a primal basic feasible solution (dual infeasible) until the dual feasibility conditions are satisfied, while the dual simplex method starts with a dual basic feasible solution (primal infeasible) until the primal feasibility conditions are satisfied. The ALM also has different performance when applied to the primal and dual problems, i.e., ALM applied to the primal problem is equivalent to proximal point method applied to the related dual problem, and proximal point method is, in general, different from ALM on the same problem.

\section{Primal-dual equivalence of ADM}\label{sec:ADM}
In this section we show the primal-dual equivalence      of ADM. Algorithms~\ref{alg:ADM-1}-\ref{alg:ADM-3}~describe how ADM is applied to~\eqref{for:P3-ADMM},~\eqref{for:P4-ADMM}, and~\eqref{for:D3-ADMM}/~\eqref{for:D4-ADMM}\cite{ADM1,ADM2}.

\begin{algorithm}[H]
\caption{ADM on~\eqref{for:P3-ADMM}}
\label{alg:ADM-1}
\begin{algorithmic}
\STATE initialize $\vx_1^0$, $\vz_1^0$, $\lambda>0$
\FOR {$k =0,1, \cdots$}
\STATE $\vy_1^{k+1} \in \argmin\limits_\vy g(\vy) +(2\lambda)^{-1}\|\vA\vx_1^k+\vB\vy-\vb+\lambda\vz_1^k\|_2^2$
\STATE $\vx_1^{k+1} \in \argmin\limits_\vx f(\vx)+(2\lambda)^{-1}\|\vA\vx+\vB\vy_1^{k+1}-\vb+\lambda\vz_1^k\|_2^2$
\STATE $\vz_1^{k+1} = \vz_1^k+\lambda^{-1}(\vA\vx_1^{k+1}+\vB\vy_1^{k+1}-\vb)$
\ENDFOR
\end{algorithmic}
\end{algorithm}
\begin{algorithm}[H]
\caption{ADM on~\eqref{for:P4-ADMM}}
\label{alg:ADM-2}
\begin{algorithmic}
\STATE initialize $\vs_2^0$, $\vz_2^0$, $\lambda>0$
\FOR {$k =0,1, \cdots$}
\STATE $\vt_2^{k+1} = \argmin\limits_\vt G(\vt)+(2\lambda)^{-1}\|\vs_2^k+\vt+\lambda\vz_2^k\|_2^2$
\STATE $\vs_2^{k+1} = \argmin\limits_\vs F(\vs)+(2\lambda)^{-1}\|\vs+\vt_2^{\vk+1}+\lambda\vz_2^k\|_2^2$
\STATE $\vz_2^{k+1} = \vz_2^k+\lambda^{-1}(\vs_2^{k+1}+\vt_2^{k+1})$
\ENDFOR
\end{algorithmic}
\end{algorithm}
\begin{algorithm}[H]
\caption{ADM on~\eqref{for:D3-ADMM}/\eqref{for:D4-ADMM}}
\label{alg:ADM-3}
\begin{algorithmic}
\STATE initialize $\vu_3^0$, $\vz_3^0$, $\lambda>0$
\FOR {$k =0,1, \cdots$}
\STATE $\vv_3^{k+1} = \argmin\limits_\vv G^*(-\vv)+{\lambda\over 2}\|\vu_3^k-\vv+\lambda^{-1}\vz_3^k\|_2^2$
\STATE $\vu_3^{k+1} = \argmin\limits_\vu F^*(-\vu)+{\lambda\over2}\|\vu-\vv_3^{\vk+1}+\lambda^{-1}\vz_3^k\|_2^2$
\STATE $\vz_3^{k+1} = \vz_3^k+\lambda(\vu_3^{k+1}-\vv_3^{k+1})$
\ENDFOR
\end{algorithmic}
\end{algorithm}

The $\vy_1^{k}$ and $\vx_1^{k}$ in Algorithm~\ref{alg:ADM-1} may not be unique because of the matrices $\vA$ and $\vB$, while $\vA\vx^k_1$ and $\vB\vy^k_1$ are unique. In addition, $\vA\vx^k_1$ and $\vB\vy^k_1$ are calculated for twice and thus stored in the implementation of Algorithm~\ref{alg:ADM-1} to save the second calculation. Following the equivalence of Algorithms~\ref{alg:ADM-1} and~\ref{alg:ADM-2} in Part 1 of the following theorem~\ref{thm:ADMM-equal}, we can view problem \eqref{for:P4-ADMM}  as the \emph{master problem} of \eqref{for:P3-ADMM}.  We can say that ADM is essentially an algorithm applied only to the master problem \eqref{for:P4-ADMM}, which is Algorithm ~~\ref{alg:ADM-2}; this fact has been obscured by the often-seen Algorithm ~\ref{alg:ADM-1}, which  integrates ADM on the master problem with the independent subproblems in~\eqref{for:subprobs}.

\begin{theorem}[Equivalence of Algorithms~\ref{alg:ADM-1}-\ref{alg:ADM-3}] \label{thm:ADMM-equal}
Suppose $\vA\vx_1^0=\vs_2^0=\vz_3^0$ and $\vz_1^0=\vz_2^0=\vu_3^0$ and that the same parameter $\lambda$ is used in Algorithms~\ref{alg:ADM-1}-\ref{alg:ADM-3}. Then, their equivalence can be established as follows:
\begin{enumerate}
\item 
From $\vx_1^k$, $\vy_1^k$, $\vz_1^k$ of Algorithm~\ref{alg:ADM-1}, we obtain $\vt_2^k$, $\vs_2^k$, $\vz_2^k$ of Algorithm~\ref{alg:ADM-2} through:
\begin{subequations}
\begin{align}
\vt_2^k&=\vB\vy_1^k-\vb,\label{p1vt}\\
\vs_2^k&=\vA\vx_1^k,\label{p1vs}\\
\vz_2^k&=\vz_1^k.\label{p1vz}
\end{align}
\end{subequations}
From $\vt_2^k$, $\vs_2^k$, $\vz_2^k$ of Algorithm~\ref{alg:ADM-2}, we obtain $\vy_1^k$, $\vx_1^k$, $\vz_1^k$ of Algorithm~\ref{alg:ADM-1} through:
\begin{subequations}
\begin{align}
\vy_1^k &= \argmin_\vy\{g(\vy):  \vB\vy-\vb=\vt_2^k\},\label{p2vy}\\
\vx_1^k &= \argmin_\vx\{f(\vx): \vA\vx=\vs_2^k\},\label{p2vx}\\
\vz_1^k&=\vz_2^k.\label{p2vz}
\end{align}
\end{subequations}
\item 
We can recover the iterates of Algorithm~\ref{alg:ADM-2} and ~\ref{alg:ADM-3} from  each other through
\begin{align}\label{p3uz}
 \vu_3^k=\vz_2^k,\qquad \vz_3^k=\vs_2^k.
\end{align}
\end{enumerate}
\end{theorem}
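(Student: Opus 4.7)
\textbf{Part 1 (Algorithm~\ref{alg:ADM-1} $\leftrightarrow$ Algorithm~\ref{alg:ADM-2}).} The plan is to use induction on $k$ and the infimal postcomposition characterisation $F=\vL_f\triangleright f$, $G=\vL_g\triangleright g$. The base case is exactly the hypothesis on the initialisation. For the induction step, I would rewrite the $\vy$-subproblem of Algorithm~\ref{alg:ADM-1} by performing the minimisation in two stages: first fix $\vt=\vB\vy-\vb$ and minimise over $\vy$ subject to this linear constraint (which by definition yields $G(\vt)$), then minimise over $\vt$. Using the inductive identifications $\vs_2^k=\vA\vx_1^k$ and $\vz_2^k=\vz_1^k$, the outer problem becomes exactly the $\vt$-update of Algorithm~\ref{alg:ADM-2}, giving $\vt_2^{k+1}=\vB\vy_1^{k+1}-\vb$. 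The same two-stage argument on the $\vx$-subproblem yields $\vs_2^{k+1}=\vA\vx_1^{k+1}$, after which the $\vz$-update in the two algorithms becomes literally the same identity because $\vA\vx_1^{k+1}+\vB\vy_1^{k+1}-\vb=\vs_2^{k+1}+\vt_2^{k+1}$. The reverse direction is then trivial: given $\vs_2^k,\vt_2^k$, any minimiser of $f$ (resp.\ $g$) over the affine fibre $\{\vA\vx=\vs_2^k\}$ (resp.\ $\{\vB\vy-\vb=\vt_2^k\}$) supplies a valid $\vx_1^k$ (resp.\ $\vy_1^k$) for Algorithm~\ref{alg:ADM-1}, which is why \eqref{p2vy}--\eqref{p2vx} come as ``$\argmin$'' over fibres rather than unique values.

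\textbf{Part 2 (Algorithm~\ref{alg:ADM-2} $\leftrightarrow$ Algorithm~\ref{alg:ADM-3}).} Here the tool is Moreau's identity
\[
\vx \;=\; \prox_{\lambda h}(\vx) + \lambda\,\prox_{h^*/\lambda}(\vx/\lambda),
\]
applied to $h=F$ and $h=G$. I would again argue by induction, assuming $\vu_3^k=\vz_2^k$ and $\vz_3^k=\vs_2^k$. The $\vv$-step of Algorithm~\ref{alg:ADM-3} can be written, after the change of variable $\vw=-\vv$, as a proximal evaluation of $G^*/\lambda$; Moreau's identity then converts it into an expression involving $\prox_{\lambda G}(-\vs_2^k-\lambda\vz_2^k)=\vt_2^{k+1}$, yielding the clean relation $\lambda\vv_3^{k+1}=\lambda\vz_2^k+\vs_2^k+\vt_2^{k+1}$. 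Substituting this into the $\vu$-step (again via a sign change and Moreau's identity applied to $F$) produces $\prox_{\lambda F}(-\lambda\vz_2^k-\vt_2^{k+1})=\vs_2^{k+1}$, from which a short algebraic simplification gives $\vu_3^{k+1}=\vz_2^k+\lambda^{-1}(\vs_2^{k+1}+\vt_2^{k+1})=\vz_2^{k+1}$. Finally, plugging these into the explicit $\vz_3$-update and telescoping $\vz_2^{k+1}-\vz_2^k=\lambda^{-1}(\vs_2^{k+1}+\vt_2^{k+1})$ gives $\vz_3^{k+1}=\vs_2^{k+1}$, closing the induction.

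\textbf{Where I expect friction.} The bookkeeping of signs and of the parameter $\lambda$ in the Moreau identity is the part most likely to trip up a careless reader; Algorithm~\ref{alg:ADM-3} uses $F^*(-\vu)$ and $G^*(-\vv)$ (not $F^*,G^*$ directly), and the primal prox-parameter $\lambda$ becomes $1/\lambda$ for the dual conjugate, so one must be systematic about reflecting arguments and rescaling. Part 1 also has a subtle well-posedness issue: the subproblems in Algorithm~\ref{alg:ADM-1} need not have unique minimisers in $\vx,\vy$, yet the images $\vA\vx_1^k$ and $\vB\vy_1^k$ are unique (as noted in the paragraph preceding the theorem). I would therefore phrase the equivalence at the level of these images, which is exactly what maps \eqref{p1vt}--\eqref{p1vz} and \eqref{p2vy}--\eqref{p2vz} encode. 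No deep new estimate is required; the content of the theorem is that the correct change of variables exposes an identity between two iterations.
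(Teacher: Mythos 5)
Your proposal is correct. Part 1 follows the paper's own argument almost verbatim: the paper also proceeds by induction, forms the joint problem in $(\vy,\vt)$ with the indicator of $\{(\vy,\vt):\vB\vy-\vb=\vt\}$ added to $g(\vy)$ plus the quadratic penalty, minimizes in the two orders to identify the $\vy_1^{k+1}$-subproblem of Algorithm~\ref{alg:ADM-1} with the $\vt_2^{k+1}$-subproblem of Algorithm~\ref{alg:ADM-2}, deduces $\vt_2^{k+1}=\vB\vy_1^{k+1}-\vb$ together with the fibre-minimization recovery \eqref{p2vy}, repeats this for the $\vx/\vs$ pair, and then matches the multiplier updates. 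For Part 2 the paper keeps the same induction but works through chains of subdifferential optimality conditions, passing from $\vzero\in\partial G(\vt_2^{k+1})+\lambda^{-1}(\vs_2^k+\vt_2^{k+1}+\lambda\vz_2^k)$ to the optimality condition of the $\vv_3^{k+1}$-subproblem (and analogously with $F$ for $\vu_3^{k+1}$), whereas you evaluate the dual proximal steps explicitly via the generalized Moreau identity. The two tools are equivalent (the Moreau identity is precisely the prox form of the inversion $\vp\in\partial G(\vt)\Leftrightarrow\vt\in\partial G^*(\vp)$), and your bookkeeping checks out: $\lambda\vv_3^{k+1}=\lambda\vz_2^k+\vs_2^k+\vt_2^{k+1}$, then $\vu_3^{k+1}=\vz_2^k+\lambda^{-1}(\vs_2^{k+1}+\vt_2^{k+1})=\vz_2^{k+1}$, and finally $\vz_3^{k+1}=\vs_2^{k+1}$ are exactly the identities the paper derives. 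Your route is slightly more computational but packages the conjugacy step cleanly and makes the $\lambda\leftrightarrow\lambda^{-1}$ rescaling and sign reflections explicit; the paper itself remarks that the primal--dual equivalence can alternatively be obtained through the Moreau identity (via the ADM--DRS correspondence) but opts for the elementary optimality-condition derivation to produce the recovery formulas, so your Part 2 is a direct realization of that alternative rather than a gap.
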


\begin{proof}
\emph{Part 1.} Proof by induction.\\
We argue that under \eqref{p1vs} and \eqref{p1vz}, Algorithms 1 and 2 have \emph{essentially identical} subproblems in their \emph{first} steps at the $k$th iteration.  Consider the following problem, which is obtained by plugging the definition of $G(\cdot)$  into the $\vt_2^{k+1}$-subproblem of  Algorithm~\ref{alg:ADM-2}:
\begin{align}\label{for:ytsub}
(\vy_1^{k+1},\vt_2^{k+1})=\argmin_{\vy,\vt} g(\vy)+\iota_{\{(\vy,\vt):\vB\vy-\vb=\vt\}}(\vy,\vt)+(2\lambda)^{-1}\|\vs_2^k+\vt+\lambda\vz_2^k\|_2^2.
\end{align}
If one minimizes over $\vy$ first while keeping $\vt$ as a variable, one eliminates $\vy$ and recovers the $\vt_2^{k+1}$-subproblem of  Algorithm~\ref{alg:ADM-2}. If one minimizes over $\vt$ first while keeping $\vy$ as a variable, then after plugging in \eqref{p1vs}  and \eqref{p1vz}, problem \eqref{for:ytsub} reduces to the  $\vy_1^{k+1}$-subproblem of  Algorithm~\ref{alg:ADM-1}.
In addition, $(\vy_1^{k+1},\vt_2^{k+1})$  obeys
\begin{align}\vt_2^{k+1}=\vB\vy_1^{k+1}-\vb, \label{for:tbBy}
\end{align}
which is \eqref{p1vt} at ${k+1}$. Plugging $\vt = \vt_2^{k+1}$ into \eqref{for:ytsub} yields problem \eqref{p2vy} for $\vy_1^{k+1}$, which must be equivalent to the  $\vy_1^{k+1}$-subproblem of  Algorithm~\ref{alg:ADM-2}. Therefore,  the  $\vy_1^{k+1}$-subproblem of  Algorithm~\ref{alg:ADM-1} and the $\vt_2^{k+1}$-subproblem of  Algorithm~\ref{alg:ADM-2} are equivalent through \eqref{p1vt} and  \eqref{p2vy} at $k+1$, respectively.

Similarly, under \eqref{for:tbBy} and \eqref{p1vz}, we can show that the  $\vx_1^{k+1}$-subproblem of  Algorithm~\ref{alg:ADM-1} and the $\vs_2^{k+1}$-subproblem of  Algorithm~\ref{alg:ADM-2} are equivalent through the formulas for \eqref{p1vs} and  \eqref{p2vx} at $k+1$, respectively.

Finally, under \eqref{p1vt} and \eqref{p1vs} at $k+1$ and $\vz_2^k=\vz_1^k$, the formulas for $\vz_1^{k+1}$ and $\vz_2^{k+1}$ in Algorithms 1 and 2 are identical, and they return $\vz_1^{k+1}=\vz_2^{k+1}$, which is \eqref{p1vz} and  \eqref{p2vz} at $k+1$.

\emph{Part 2.}
Proof by induction. Suppose that \eqref{p3uz} holds. We shall show that \eqref{p3uz} holds at $k+1$. Starting from the optimality condition of the $\vt_2^{k+1}$-subproblem of Algorithm~\ref{alg:ADM-2}, we derive
\begin{align*}
& \vzero\in \partial G(\vt_2^{k+1})+\lambda^{-1} (\vs_2^k+\vt_2^{k+1}+\lambda\vz_2^k)\\
\Longleftrightarrow~ & \vt_2^{k+1}\in\partial G^*(-\lambda^{-1} (\vs_2^k+\vt_2^{k+1}+\lambda\vz_2^k))\\
\Longleftrightarrow~ & \lambda\left[\lambda^{-1} (\vs_2^k+\vt_2^{k+1}+\lambda\vz_2^k)\right]-(\lambda\vz_2^k+\vs_2^k)\in\partial G^*(-\lambda^{-1} (\vs_2^k+\vt_2^{k+1}+\lambda\vz_2^k))\\
\Longleftrightarrow~ & -\lambda\left[\lambda^{-1} (\vs_2^k+\vt_2^{k+1}+\lambda\vz_2^k)\right]+(\lambda\vu_3^k+\vz_3^k)\in-\partial G^*(-\lambda^{-1} (\vs_2^k+\vt_2^{k+1}+\lambda\vz_2^k))\\
\Longleftrightarrow~ & \vzero\in -\partial G^*(-\lambda^{-1} (\vs_2^k+\vt_2^{k+1}+\lambda\vz_2^k))-\lambda\left[\vu_3^k-\lambda^{-1} (\vs_2^k+\vt_2^{k+1}+\lambda\vz_2^k)+\lambda^{-1}\vz_3^k\right]\\
\Longleftrightarrow~ & \vv_3^{k+1}=\lambda^{-1} (\vs_2^k+\vt_2^{k+1}+\lambda\vz_2^k)=\lambda^{-1} (\vz_3^k+\vt_2^{k+1}+\lambda\vz_2^k),
\end{align*}
where the last equivalence follows from the  optimality condition for the $\vv_3^{k+1}$-subproblem of Algorithm~\ref{alg:ADM-3}.

Starting from the optimality condition of the $\vs_2^{k+1}$-subproblem of Algorithm~\ref{alg:ADM-2}, and applying the update, $\vz_2^{k+1}=\vz_2^k+\lambda^{-1}(\vs_2^{k+1}+\vt_2^{k+1})$, in Algorithm~\ref{alg:ADM-2} and the identity of $\vt_2^{k+1}$ obtained above,  we derive \begin{align*}
&\vzero\in \partial F(\vs_2^{k+1})+\lambda^{-1}(\vs_2^{k+1}+\vt_2^{k+1}+\lambda\vz_2^k)\\
\Longleftrightarrow~ &\vzero\in \partial F(\vs_2^{k+1})+\vz_2^{k+1}\\
\Longleftrightarrow~ &\vzero\in \vs_2^{k+1}-\partial F^*(-\vz_2^{k+1})\\
\Longleftrightarrow~ &\vzero\in \lambda(\vz_2^{k+1}-\vz_2^k)-\vt_2^{k+1}-\partial F^*(-\vz_2^{k+1})\\
\Longleftrightarrow~ &\vzero\in \lambda(\vz_2^{k+1}-\vz_2^k)+\vz_3^k+\lambda (\vz_2^k-\vv_3^{k+1})-\partial F^*(-\vz_2^{k+1})\\
\Longleftrightarrow~ &\vzero\in -\partial F^*(-\vz_2^{k+1})+\lambda(\vz_2^{k+1}-\vv_3^{k+1}+\lambda^{-1}\vz_3^k)\\
\Longleftrightarrow~ & \vz_2^{k+1} = \vu_3^{k+1}.
\end{align*}
where the last equivalence follows from the optimality condition for the $\vu_3^{k+1}$-subproblem of Algorithm~\ref{alg:ADM-3}. Finally, combining the update formulas of $\vz_2^{k+1}$ and $\vz_3^{k+1}$ in Algorithm~~\ref{alg:ADM-2} and \ref{alg:ADM-3}, respectively, as well as the identities for $\vu_3^{k+1}$ and $\vv_3^{k+1}$ obtained above, we obtain
\begin{align*}
\vz_3^{k+1}&=\vz_3^{k}+\lambda(\vu_3^{k+1}-\vv_3^{k+1})=\vs^k+\lambda (\vz_2^{k+1}-\vz_2^k-\lambda^{-1}(\vs_2^k+\vt_2^{k+1}))\\
&=\lambda (\vz_2^{k+1}-\vz_2^k)-\vt_2^{k+1} = \vs_2^{k+1}.
\end{align*}
\end{proof}

\begin{remark} Part 2 of the theorem (ADM's primal-dual equivalence) can also be derived by combining the following two equivalence results:\ (i) the equivalence between ADM on the primal problem and the Douglas-Rachford splitting (DRS) algorithm~\cite{Douglas1956on,Lions1979spliting} on the dual problem~\cite{Gabay1983}, and (ii) the equivalence result between DRS algorithms applied to the master problem~\eqref{for:P4-ADMM} and its dual problem (cf.~\cite[Chapter~3.5]{eckstein1989splitting}\cite{eckstein1994some}). In this chapter, however, we  provide an elementary  algebraic proof in order to derive  the  formulas in theorem~\ref{thm:ADMM-equal} that recover the iterates of one algorithm from another.
\end{remark}

Part 2 of the theorem shows that ADM is a symmetric primal-dual  algorithm. The reciprocal positions of parameter $\lambda$ indicates its function to ``balance'' the primal and dual progresses.

Part 2 of the theorem also shows that Algorithms~\ref{alg:ADM-2} and~\ref{alg:ADM-3} have no difference, in terms of per-iteration complexity and the number of iterations needed to reach an accuracy. However, Algorithms~\ref{alg:ADM-1} and~\ref{alg:ADM-2} have difference in terms of per-iteration complexity. In fact, Algorithm~\ref{alg:ADM-2} is implemented for Algorithm~\ref{alg:ADM-1} because Algorithm~\ref{alg:ADM-2} has smaller complexity than Algorithm~\ref{alg:ADM-1}. See the examples in sections~\ref{sec:examples41} and~\ref{sec:examples42}.

\subsection{Primal-dual equivalence of ADM on~\eqref{uv} with three subproblems}
In section~\ref{sec:11}, we introduced four different ways to apply ADM on~\eqref{uv} with three subproblems. The ADM-ready formulation for the primal problem is~\eqref{xyz}, and the ADM applied to this formulation is 
\begin{subequations}\label{alg:xyz_primal}
\begin{align}
\vx^{k+1} &= \argmin\limits_\vx \|\vx-\vs^k+\lambda\vz_\vs^k\|_2^2+\|\vC\vx-\vy^k+\lambda\vz_\vy^k\|_2^2,\\
\vs^{k+1} &= \argmin\limits_\vs u(\vs) +(2\lambda)^{-1}\|\vx^{k+1}-\vs+\lambda\vz_\vs^k\|_2^2,\\
\vy^{k+1} &= \argmin\limits_\vy v(\vy)+(2\lambda)^{-1}\|\vC\vx^{k+1}-\vy+\lambda\vz_\vy^k\|_2^2,\\
\vz_\vs^{k+1} &= \vz_\vs^k+\lambda^{-1}(\vx^{k+1}-\vs^{k+1}),\\
\vz_\vy^{k+1} &= \vz_\vy^k+\lambda^{-1}(\vC\vx^{k+1}-\vy^{k+1}).
\end{align}
\end{subequations}
Similarly, we can introduce a dummy variable $\vt$ into the left formulation in~\eqref{uvdualeq} and obtain a new equivalent formulation
\beq\label{xyz2} 
\left\{\begin{array}{ll}\Min\limits_{\vu,\vv,\vt} \quad& u^*(\vu)+v^*(\vt)\\
\st \quad& \vC^*\vv+\vu=0, ~\vv-\vt=0. \end{array} \right.
\eeq
The ADM applied to~\eqref{xyz2} is
\begin{subequations}\label{alg:xyz_dual}
\begin{align}
\vv^{k+1} &= \argmin\limits_\vv \|\vC^*\vv+\vu^k+\lambda^{-1}\vz_\vu^k\|_2^2+\|\vv-\vt^k+\lambda^{-1}\vz_\vt^k\|_2^2,\\
\vu^{k+1} &= \argmin\limits_\vu u^*(\vu) +{\lambda\over2}\|\vC^*\vv^{k+1}+\vu+\lambda^{-1}\vz_\vu^k\|_2^2,\\
\vt^{k+1} &= \argmin\limits_\vt v^*(\vt) +{\lambda\over2}\|\vv^{k+1}-\vt+\lambda^{-1}\vz_\vt^k\|_2^2,\\
\vz_\vu^{k+1} &= \vz_\vu^k+\lambda(\vC^*\vv^{k+1}+\vu^{k+1}),\\
\vz_\vt^{k+1} &= \vz_\vt^k+\lambda(\vv^{k+1}-\vt^{k+1}).
\end{align}
\end{subequations}

Interestingly, as shown in the following corollary, ADM algorithms~\eqref{alg:xyz_primal} and~\eqref{alg:xyz_dual} applied to~\eqref{xyz} and ~\eqref{xyz2} are equivalent. 
\begin{corollary}
If the initialization for algorithms~\eqref{alg:xyz_primal} and~\eqref{alg:xyz_dual} satisfies $\vz_\vy^0 =  \vt^0$, $\vz_\vs^0=\vu^0$, $\vs^0=-\vz_\vu^0$, and $\vy^0=\vz_\vt^0$. Then for $k\geq1$, we have the following equivalence results between the iterations of the two algorithms:
\begin{align*}
\vz_\vy^k =  \vt^k,\quad \vz_\vs^k=\vu^k,\quad \vs^k=-\vz_\vu^k,\quad \vy^k=\vz_\vt^k.
\end{align*}
\end{corollary}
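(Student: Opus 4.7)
The plan is to recognize \eqref{alg:xyz_primal} as Algorithm~\ref{alg:ADM-1} applied to a suitable grouping of \eqref{xyz}, and \eqref{alg:xyz_dual} as Algorithm~\ref{alg:ADM-3} applied to the ADM-ready dual of that grouping, so that the corollary becomes a direct application of Theorem~\ref{thm:ADMM-equal}.

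First, I cast \eqref{xyz} into the form of \eqref{for:P3-ADMM} with the grouping $\vx_1 := (\vs,\vy)$ and $\vy_1 := \vx$ (so that $\vy_1$ is Algorithm~\ref{alg:ADM-1}'s first-updated variable), taking $f(\vs,\vy) := u(\vs) + v(\vy)$, $g \equiv 0$, and matrices $\vA, \vB$ satisfying $\vA\vx_1 = (-\vy,-\vs)$, $\vB\vy_1 = (\vC\vx,\vx)$, $\vb = \vzero$. Since $f$ is separable, the $\vx_1$-subproblem of Algorithm~\ref{alg:ADM-1} decouples into independent $\vs$- and $\vy$-subproblems, and running Algorithm~\ref{alg:ADM-1} reproduces exactly the five steps of \eqref{alg:xyz_primal}, with the multiplier identification $\vz_1 = (\vz_\vy,\vz_\vs)$.

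Next, I derive the ADM-ready dual \eqref{for:D3-ADMM} of this reformulation. Because $g \equiv 0$ forces $g^*$ to be the indicator of $\{\vzero\}$, the $\vv$-component of \eqref{for:D3-ADMM} is constrained by $\vC^*\vv_1 + \vv_2 = \vzero$; eliminating $\vv_2 = -\vC^*\vv_1$ and renaming $(\vu_1,\vu_2,\vv_1) \mapsto (\vt,\vu,\vv)$ collapses \eqref{for:D3-ADMM} to exactly \eqref{xyz2}. Performing the same elimination and renaming throughout Algorithm~\ref{alg:ADM-3}'s subproblems and multiplier update reproduces \eqref{alg:xyz_dual}, with the multiplier identification $\vz_3 = (-\vz_\vt,\vz_\vu)$; the sign flip on the first slot arises from the opposite sign of $(\vu_1 - \vv_1)$ inside Algorithm~\ref{alg:ADM-3}'s quadratic versus $(\vv - \vt)$ inside the $\vv$-subproblem of \eqref{alg:xyz_dual}.

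With both algorithms identified, Theorem~\ref{thm:ADMM-equal} (combining Parts~1 and 2) gives $\vu_3^k = \vz_1^k$ and $\vz_3^k = \vA\vx_1^k$. Reading componentwise: $(\vt^k,\vu^k) = (\vz_\vy^k,\vz_\vs^k)$ yields $\vz_\vy^k = \vt^k$ and $\vz_\vs^k = \vu^k$, while $(-\vz_\vt^k,\vz_\vu^k) = (-\vy^k,-\vs^k)$ yields $\vy^k = \vz_\vt^k$ and $\vs^k = -\vz_\vu^k$. The four hypothesized initial identities translate verbatim to the initialization $\vA\vx_1^0 = \vz_3^0$ and $\vz_1^0 = \vu_3^0$ required by Theorem~\ref{thm:ADMM-equal}, so the corollary follows. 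The main bookkeeping obstacle is managing the two interacting sign conventions — the off-diagonal $-\vI$ entries of $\vA$ and the elimination $\vv_2 = -\vC^*\vv_1$ — which together produce the minus sign in $\vs^k = -\vz_\vu^k$; once the indexing is pinned down, no further optimality-condition manipulations are needed beyond those already packaged in Theorem~\ref{thm:ADMM-equal}.
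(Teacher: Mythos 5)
Your proposal is correct, and it takes a different route from the paper, which omits the proof and indicates a direct induction on the optimality conditions of the five-step iterations, ``similar to the proof of Theorem~\ref{thm:ADMM-equal}.'' I checked your block bookkeeping and it goes through: with $\vy_1:=\vx$, $\vx_1:=(\vs,\vy)$, $f(\vs,\vy)=u(\vs)+v(\vy)$, $g\equiv 0$, $\vA(\vs,\vy)=(-\vy,-\vs)$, $\vB\vx=(\vC\vx,\vx)$, $\vb=\vzero$, Algorithm~\ref{alg:ADM-1} reproduces \eqref{alg:xyz_primal} exactly, with $\vz_1=(\vz_\vy,\vz_\vs)$ and $\vA\vx_1^k=(-\vy^k,-\vs^k)$; since $g^*=\iota_{\{\vzero\}}$, the $\vv$-block of \eqref{for:D3-ADMM} is confined to the subspace $\vv_2=-\vC^*\vv_1$, and eliminating it with the renaming $(\vu_1,\vu_2,\vv_1)\mapsto(\vt,\vu,\vv)$ collapses the problem to \eqref{xyz2} and Algorithm~\ref{alg:ADM-3} to \eqref{alg:xyz_dual}, with $\vu_3=(\vt,\vu)$, $\vv_3=(\vv,-\vC^*\vv)$, $\vz_3=(-\vz_\vt,\vz_\vu)$, the sign flip being precisely the multiplier of the constraint written as $\vt-\vv=\vzero$ instead of $\vv-\vt=\vzero$. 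Then the identities $\vu_3^k=\vz_1^k$ and $\vz_3^k=\vA\vx_1^k$ from Theorem~\ref{thm:ADMM-equal} read off the four claimed relations, and the corollary's initialization is verbatim $\vz_1^0=\vu_3^0$, $\vA\vx_1^0=\vz_3^0$. Compared with the paper's intended direct induction, your reduction buys brevity and reuse of the already-established maps (no new subgradient manipulations are needed), at the price of the slot-and-sign verification that the eliminated Algorithm~\ref{alg:ADM-3} is literally \eqref{alg:xyz_dual}; if you write it up, do spell out that elimination for both subproblems and both multiplier updates, since that is where all the signs are earned, and note that all subproblems here have unique minimizers so no tie-breaking issues arise.
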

The proof is similar to the proof of Theorem~\ref{thm:ADMM-equal} and is omitted here.


\subsection{Example: basis pursuit}\label{sec:examples41}
The basis pursuit problem seeks for the minimal $\ell_1$ solution to a set of linear equations:
\begin{align}\label{for:BP}
\Min_\vu \|\vu\|_1\quad\st~\vA\vu=\vb.
\end{align}
Its Lagrange dual is
\begin{align}\label{for:BPDual}
\Min_\vx -\vb^T\vx\quad\st~\|\vA^*\vx\|_\infty \leq1.
\end{align}
The YALL1 algorithms~\cite{Yall1} implement ADMs on a set of primal and dual formulations for basis pursuit and LASSO, yet ADM for~\eqref{for:BP} is not given (however, a linearized ADM is given for \eqref{for:BP}). Although seemingly awkward, problem ~\eqref{for:BP} can be turned equivalently into the ADM-ready form
\begin{align}\label{for:BPSplt}
\Min_{\vu,\vv} \|\vv\|_1 +\iota_{\{\vu:\vA\vu=\vb\}}(\vu)\quad\st~\vu-\vv=\vzero.
\end{align}
Similarly, problem ~\eqref{for:BPDual} can be turned equivalently into the ADM-ready form
\begin{align}\label{for:BPDualSplt}
\Min_{\vx,\vy} -\vb^T\vx +\iota_{B_1^\infty}(\vy)\quad\st~\vA^*\vx-\vy=\vzero,
\end{align}
where $B_1^\infty=\{\vy:\|\vy\|_\infty\leq1\}$.

For simplicity, let us suppose that $\vA$ has full row rank so the inverse of $\vA\vA^*$ exists. (Otherwise, $\vA\vu=\vb$ are redundant whenever  they are consistent; and $(\vA\vA^*)^{-1}$ shall be replaced by the pseudo-inverse below.) ADM for problem \eqref{for:BPSplt} can be simplified to  the iteration:
\begin{subequations}\label{eqn:BP-P}
\begin{align}
\label{for:BP-PADM-u1}\vv_3^{k+1} = &\argmin_{\vv} \|\vv\|_1+{\lambda\over 2}\|\vu_3^k-\vv+{1\over \lambda}\vz_3^k\|_2^2,\\
\label{for:BP-PADM-v1}\vu_3^{k+1} = &{\vv_3^{k+1}-{1\over \lambda}\vz_3^k}-\vA^*(\vA\vA^*)^{-1}(\vA(\vv_3^{k+1}-{1\over \lambda}\vz_3^k)-\vb),\\
\vz_3^{k+1}=&\vz_3^k+\lambda(\vu_3^{k+1}-\vv_3^{k+1}).
\end{align}
\end{subequations}
And ADM for problem \eqref{for:BPDualSplt} can be simplified to   the iteration:
\begin{subequations}\label{eqn:BP-D}
\begin{align}
\label{for:BP-DADM-y1}\vy_1^{k+1} = &\cP_{B_1^\infty}(\vA^*\vx_1^k+\lambda\vz_1^k),\\
\label{for:BP-DADM-x1}\vx_1^{k+1} = &(\vA\vA^*)^{-1}(\vA\vy_1^{k+1}-\lambda(\vA\vz_1^k-\vb)),\\
\vz_1^{k+1}=&\vz_1^k+\lambda^{-1}(\vA^*\vx_1^{k+1}-\vy_1^{k+1}),
\end{align}
\end{subequations}
where $\cP_{B_1^\infty}$ is the projection onto $B_1^\infty$.
Looking into the iteration in~\eqref{eqn:BP-D}, we can find that $\vA^*\vx_1^k$ is used in both the $k$th and $k+1$st iterations. To save the computation, we can store $\vA^*\vx_1^k$ as $\vs_2^k$. In addition, let $\vt_2^k=\vy_1^k$ and $\vz_2^k=\vz_1^k$, we have 
\begin{subequations}\label{eqn:BP-D2}
\begin{align}
\label{for:BP-DADM-t1}\vt_2^{k+1} = &\cP_{B_1^\infty}(\vs_2^k+\lambda\vz_2^k),\\
\label{for:BP-DADM-s1}\vs_2^{k+1} = &\vA^*(\vA\vA^*)^{-1}(\vA(\vt_2^{k+1}-\lambda\vA\vz_2^k)+\lambda\vb)),\\
\vz_2^{k+1}=&\vz_2^k+\lambda^{-1}(\vs_2^{k+1}-\vt_2^{k+1}),
\end{align}
\end{subequations}
which is exactly Algorithm~\ref{alg:ADM-2} for~\eqref{for:BPDualSplt}. Thus, Algorithm~\ref{alg:ADM-2} has smaller complexity than Algorithm~\ref{alg:ADM-1}, i.e., one matrix vector multiplication $\vA^*\vx_1^k$ is saved from Algorithm~\ref{alg:ADM-2}.

The  corollary below follows directly from Theorem~\ref{thm:ADMM-equal} by associating \eqref{for:BPDualSplt} and  \eqref{for:BPSplt} as \eqref{for:P3-ADMM} and \eqref{for:D4-ADMM}, and \eqref{eqn:BP-D} and \eqref{eqn:BP-P} with the iterations of Algorithms~\ref{alg:ADM-1} and \ref{alg:ADM-3}, respectively.
\begin{corollary}\label{thm:BP-equal}Suppose that $\vA\vu=\vb$ are consistent. Consider ADM iterations \eqref{eqn:BP-P} and  \eqref{eqn:BP-D}. Let $\vu_3^0=\vz_1^0$ and $\vz_3^0=\vA^*\vx_1^0$. Then, for $k\geq1$, iterations \eqref{eqn:BP-P} and \eqref{eqn:BP-D} are equivalent. In particular,
\begin{itemize}
\item From $\vx_1^k$, $\vz_1^k$ in~\eqref{eqn:BP-D}, we obtain $\vu_3^k$, $\vz_3^k$ in~\eqref{eqn:BP-P} through:
\begin{align*}
\vu_3^k=\vz_1^k,\qquad \vz_3^k=\vA^*\vx_1^k.
\end{align*}
\item From $\vu_3^k$, $\vz_3^k$ in~\eqref{eqn:BP-P}, we obtain $\vx_1^k$, $\vz_1^k$ in~\eqref{eqn:BP-D} through:
\begin{align*}
\vx_1^k =(\vA\vA^*)^{-1}\vA\vz_3^k,\qquad
\vz_1^k=\vu_3^k.
\end{align*}
\end{itemize}
\end{corollary}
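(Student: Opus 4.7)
The plan is to apply Theorem~\ref{thm:ADMM-equal} directly, with \eqref{for:BPDualSplt} playing the role of the ``primal'' \eqref{for:P3-ADMM} and \eqref{for:BPSplt} playing the role of its ADM-ready dual \eqref{for:D4-ADMM}. This amounts to three verification steps followed by a single variable inversion; the statement itself already signals this reduction, so the proof is largely bookkeeping.

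First, I would read off the correspondence: in \eqref{for:BPDualSplt}, set $f(\vx) := -\vb^T\vx$ and $g(\vy) := \iota_{B_1^\infty}(\vy)$, and match the constraint data of \eqref{for:P3-ADMM} via $\vA \mapsto \vA^*$, $\vB \mapsto -\vI$, and right-hand side $\vzero$. A short Fenchel-conjugate computation gives $f^*(\vs) = \iota_{\{-\vb\}}(\vs)$ and $g^*(\vw) = \|\vw\|_1$, so by \eqref{for:dualtoP1} the Lagrange dual of \eqref{for:BPDualSplt} is exactly the basis pursuit problem \eqref{for:BP}. Lemma~\ref{lemma:conjuga_infimal} then identifies \eqref{for:BPSplt} with \eqref{for:D3-ADMM}/\eqref{for:D4-ADMM} for this instance.

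Second, I would check that iteration \eqref{eqn:BP-D} is Algorithm~\ref{alg:ADM-1} instantiated on \eqref{for:BPDualSplt}: the $\vy$-subproblem reduces to a projection onto $B_1^\infty$, and the $\vx$-subproblem is an unconstrained quadratic whose normal equations yield the closed form \eqref{for:BP-DADM-x1} through $(\vA\vA^*)^{-1}$ (well-defined by the full-row-rank assumption on $\vA$). Symmetrically, iteration \eqref{eqn:BP-P} is Algorithm~\ref{alg:ADM-3} on \eqref{for:BPSplt}: the $\vv$-subproblem involves $G^*(-\vv) = \|\vv\|_1$, and the $\vu$-subproblem is the projection onto the affine set $\{\vu : \vA\vu = \vb\}$, whose closed form gives \eqref{for:BP-PADM-v1}.

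Third, Theorem~\ref{thm:ADMM-equal} (Parts 1 and 2 combined) then immediately delivers, for all $k \ge 1$, the identities $\vu_3^k = \vz_1^k$ and $\vz_3^k = \vA^*\vx_1^k$, which is the first direction of the corollary; the initialization hypothesis $\vu_3^0 = \vz_1^0$, $\vz_3^0 = \vA^*\vx_1^0$ is exactly the theorem's hypothesis transplanted to this setting. For the reverse direction, I would invert the relation $\vz_3^k = \vA^*\vx_1^k$ by applying $(\vA\vA^*)^{-1}\vA$ on the left to obtain $\vx_1^k = (\vA\vA^*)^{-1}\vA\vz_3^k$. The only real obstacle here is keeping the roles straight: the ``primal'' in the invocation of Theorem~\ref{thm:ADMM-equal} is the basis-pursuit \emph{dual} \eqref{for:BPDualSplt}, not \eqref{for:BP} itself, and one must verify that the update orders in \eqref{eqn:BP-P} and \eqref{eqn:BP-D} correctly mirror those in Algorithms~\ref{alg:ADM-3} and~\ref{alg:ADM-1}, respectively.
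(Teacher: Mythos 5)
Your proposal is correct and follows essentially the same route as the paper, which likewise obtains the corollary by associating \eqref{for:BPDualSplt} with \eqref{for:P3-ADMM} and \eqref{for:BPSplt} with \eqref{for:D4-ADMM}, identifying \eqref{eqn:BP-D} and \eqref{eqn:BP-P} with Algorithms~\ref{alg:ADM-1} and~\ref{alg:ADM-3}, and then invoking Theorem~\ref{thm:ADMM-equal}. Your extra verifications (the conjugate computations $f^*=\iota_{\{-\vb\}}$, $g^*=\|\cdot\|_1$, the subproblem reductions, and the left-inverse $(\vA\vA^*)^{-1}\vA$ under the full-row-rank convention) are exactly the bookkeeping the paper leaves implicit.
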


\subsection{Example: basis pursuit denoising}\label{sec:examples42}
The basis pursuit denoising problem  is
\begin{align}\label{eq:BPD}
\Min_\vu \|\vu\|_1 +{1\over 2\alpha}\|\vA\vu-\vb\|_2^2
\end{align}
and its Lagrange dual, in the ADM-ready form, is
\begin{align}\label{eq:BPDDual}
\Min_{\vx,\vy} -\langle\vb,\vx\rangle  +{\alpha\over2}\|\vx\|_2^2 +\iota_{B_1^\infty}(\vy)\quad\st~ \vA^*\vx-\vy=\vzero.
\end{align}
The iteration of ADM for \eqref{eq:BPDDual} is
\begin{subequations}\label{for:BPDN-DADM}
\begin{align}
\label{for:BPDN-DADM-y1}\vy_1^{k+1} = &\cP_{B_1^\infty}(\vA^*\vx_1^k+\lambda\vz_1^k),\\
\label{for:BPDN-DADM-x1}\vx_1^{k+1} = &(\vA\vA^*+\alpha\lambda\vI)^{-1}(\vA\vy_1^{k+1}-\lambda(\vA\vz_1^k-\vb)),\\
\vz_1^{k+1}=&\vz_1^k+\lambda^{-1}(\vA^*\vx_1^{k+1}-\vy_1^{k+1}).
\end{align}
\end{subequations}
Looking into the iteration in~\eqref{for:BPDN-DADM}, we can find that $\vA^*\vx_1^k$ is used in both the $k$th and $k+1$st iterations. To save the computation, we can store $\vA^*\vx_1^k$ as $\vs_2^k$. In addition, let $\vt_2^k=\vy_1^k$ and $\vz_2^k=\vz_1^k$, we have 
\begin{subequations}\label{for:BPDN-DADM2}
\begin{align}
\label{for:BPDN-DADM-t1}\vt_2^{k+1} = &\cP_{B_1^\infty}(\vs_2^k+\lambda\vz_2^k),\\
\label{for:BPDN-DADM-s1}\vs_2^{k+1} = &\vA^*(\vA\vA^*+\alpha\lambda\vI)^{-1}(\vA(\vt_2^{k+1}-\lambda\vz_2^k)+\lambda\vb)),\\
\vz_2^{k+1}=&\vz_2^k+\lambda^{-1}(\vs_2^{k+1}-\vt_2^{k+1}),
\end{align}
\end{subequations}
which is exactly Algorithm~\ref{alg:ADM-2} for~\eqref{eq:BPDDual}. Thus, Algorithm~\ref{alg:ADM-2} has smaller complexity than Algorithm~\ref{alg:ADM-1}, i.e.,  one matrix vector multiplication $\vA^*\vx_1^k$ is saved from Algorithm~\ref{alg:ADM-2}. In addition, if $\vA^*\vA=\vI$,~\eqref{for:BPDN-DADM-s1} becomes 
\begin{align}
\vs_2^{k+1} = &(\alpha\lambda+1)^{-1}(\vt_2^{k+1}-\lambda\vz_2^k+\lambda\vA^*\vb),
\end{align}
and no matrix vector multiplications is needed during the iteration because $\lambda\vA^*\vb$ can be precalculated.

The ADM-ready form of the original problem \eqref{eq:BPD} is
\begin{align}
\Min_{\vu,\vv} \|\vv\|_1 +{1\over2\alpha}\|\vA\vu-\vb\|_2^2 \quad\st~\vu-\vv=\vzero,
\end{align}
whose ADM iteration is
\begin{subequations}\label{for:BPDN-PADM}
\begin{align}
\label{for:BPDN-PADM-u1}\vv_3^{k+1} = &\argmin_{\vv} \|\vv\|_1+{\lambda\over 2}\|\vu_3^k-\vv+{1\over \lambda}\vz_3^k\|_2^2,\\
\label{for:BPDN-PADM-v1}\vu_3^{k+1} = &(\vA^*\vA+\alpha\lambda\vI)^{-1}(\vA^*\vb+\alpha\lambda\vv_3^{k+1}-\alpha\vz_3^k),\\
\vz_3^{k+1}=&\vz_3^k+\lambda(\vu_3^{k+1}-\vv_3^{k+1}).
\end{align}
\end{subequations}
The  corollary below follows directly from Theorem~\ref{thm:ADMM-equal}.
\begin{corollary}Consider ADM iterations \eqref{for:BPDN-DADM} and \eqref{for:BPDN-PADM}. Let $\vu_3^0=\vz_1^0$ and $\vz_3^0=\vA^*\vx_1^0$. For $k\geq1$, ADM on the dual and primal problems~\eqref{for:BPDN-DADM} and~\eqref{for:BPDN-PADM} are equivalent in the following way:
\begin{itemize}
\item From $\vx_1^k$, $\vz_1^k$ in~\eqref{for:BPDN-DADM}, we recover $\vu_3^k$, $\vz_3^k$ in~\eqref{for:BPDN-PADM} through:
\begin{align*}
\vu_3^k=\vz_1^k, \qquad
\vz_3^k=\vA^*\vx_1^k.
\end{align*}
\item From $\vu_3^k$, $\vz_3^k$ in~\eqref{for:BPDN-PADM}, we recover $\vx_1^k$, $\vz_1^k$ in~\eqref{for:BPDN-DADM} through:
\begin{align*}
\vx_1^k=-(\vA\vu_3^{k}-\vb)/\alpha,\qquad
\vz_1^k=\vu_3^k.\\
\end{align*}
\end{itemize}
\end{corollary}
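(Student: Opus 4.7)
The plan is to reduce this corollary directly to Theorem~\ref{thm:ADMM-equal}, exactly in the spirit of the basis pursuit corollary in section~\ref{sec:examples41}. First I would establish the generic template match. Interpret \eqref{eq:BPDDual} as an instance of \eqref{for:P3-ADMM} via the identifications $f(\vx) = -\langle \vb,\vx\rangle + \tfrac{\alpha}{2}\|\vx\|_2^2$, $g(\vy) = \iota_{B_1^\infty}(\vy)$, with the generic $\vA, \vB, \vb$ specialized to $\vA^*, -\vI, \vzero$, respectively. A one-line check shows that \eqref{for:BPDN-DADM} is Algorithm~\ref{alg:ADM-1} on this formulation. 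Next, compute $f^*(\vw) = \tfrac{1}{2\alpha}\|\vw + \vb\|_2^2$ and $g^*(\vw) = \|\vw\|_1$, and verify that the ADM-ready dual \eqref{for:D3-ADMM} of \eqref{eq:BPDDual}, after substituting these conjugates, is an ADM-ready form of \eqref{eq:BPD}. Writing out the corresponding Algorithm~\ref{alg:ADM-3} yields the soft-threshold step for $\vv_3^{k+1}$ and, after a first-order condition computation, the linear solve $(\vA^*\vA + \alpha\lambda\vI)^{-1}$ for $\vu_3^{k+1}$, which is exactly \eqref{for:BPDN-PADM}.

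With the correspondence in place, the forward direction of the corollary is a direct composition of the two parts of Theorem~\ref{thm:ADMM-equal}. Part 2 gives $\vu_3^k = \vz_2^k$ and $\vz_3^k = \vs_2^k$, and Part 1 gives $\vz_2^k = \vz_1^k$ and $\vs_2^k = \vA^*\vx_1^k$ (the generic ``$\vA$'' being $\vA^*$ here). Composing yields $\vu_3^k = \vz_1^k$ and $\vz_3^k = \vA^*\vx_1^k$. The initialization hypothesis $\vu_3^0 = \vz_1^0$, $\vz_3^0 = \vA^*\vx_1^0$ is exactly the specialization of the Theorem's hypothesis $\vA\vx_1^0 = \vs_2^0 = \vz_3^0$ and $\vz_1^0 = \vz_2^0 = \vu_3^0$.

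For the reverse direction, the formula $\vx_1^k = \argmin\{f(\vx): \vA^*\vx = \vs_2^k\}$ given by Part 1 is itself a constrained quadratic program, so it does not immediately produce the closed-form $\vx_1^k = -(\vA\vu_3^k - \vb)/\alpha$. I would instead read this formula off the optimality condition of the $\vx_1^{k+1}$-subproblem in \eqref{for:BPDN-DADM-x1}: differentiating gives $-\vb + \alpha\vx_1^{k+1} + \vA\bigl(\lambda^{-1}(\vA^*\vx_1^{k+1} - \vy_1^{k+1}) + \vz_1^k\bigr) = \vzero$, and the $\vz_1$-update collapses the parenthesized quantity to $\vz_1^{k+1}$, yielding $\alpha\vx_1^{k+1} = \vb - \vA\vz_1^{k+1}$. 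Since the forward map has already identified $\vz_1^{k+1} = \vu_3^{k+1}$, this gives $\vx_1^{k+1} = -(\vA\vu_3^{k+1} - \vb)/\alpha$ for every $k \ge 0$, matching the claim for $k \ge 1$.

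The main obstacle is not analytical but organizational: carefully tracking the role swap of the primal and dual formulations and the implicit transpose of $\vA$ through the three algorithm templates, so that the generic ``$\vA$'' in Algorithms~\ref{alg:ADM-1}--\ref{alg:ADM-3} is consistently mapped to $\vA^*$ and the generic ``$\vB$'' to $-\vI$. Once the bookkeeping is fixed, both maps follow, and no new convergence or optimality argument beyond what is already in Theorem~\ref{thm:ADMM-equal} is needed.
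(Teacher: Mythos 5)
Your proposal is correct and follows essentially the same route as the paper, which proves this corollary simply by instantiating Theorem~\ref{thm:ADMM-equal} with \eqref{eq:BPDDual} playing the role of \eqref{for:P3-ADMM} (generic $\vA,\vB,\vb$ mapped to $\vA^*,-\vI,\vzero$) and composing Parts 1 and 2. Your extra step of deriving the closed-form reverse map $\vx_1^{k+1}=-(\vA\vu_3^{k+1}-\vb)/\alpha$ from the optimality condition of \eqref{for:BPDN-DADM-x1} together with the $\vz_1$-update is exactly the bookkeeping the paper leaves implicit, and it is done correctly.
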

\begin{remark} Iteration \eqref{for:BPDN-PADM} is different from that of ADM for another ADM-ready form of \eqref{eq:BPD}
\begin{align}
\Min_{\vu,\vv} \|\vu\|_1 +{1\over2\alpha}\|\vv\|_2^2\quad\st~\vA\vu-\vv=\vb,
\end{align}
which is used in~\cite{Yall1}. In general, there are  different ADM-ready forms and their ADM algorithms yield different iterates. ADM on one ADM-ready form is equivalent to it on the corresponding dual ADM-ready form.
\end{remark}

\section{ADM as a primal-dual algorithm on the saddle-point problem}\label{sec:ADM-PD}

As shown in section~\ref{sec:ADM}, ADM on a pair of convex primal and dual problems are equivalent, and there is a connection between $\vz_1^k$ in Algorithm~\ref{alg:ADM-1} and dual variable $\vu_3^k$ in Algorithm~\ref{alg:ADM-3}. This primal-dual equivalence naturally suggests that  ADM is also equivalent to a primal-dual algorithm involving both primal and dual variables.

We derive  problem~\eqref{for:P3-ADMM} into an equivalent primal-dual saddle-point problem \eqref{for:saddle-point} as follows:
\begin{align}
&\min_{\vy,\vx} g(\vy)+f(\vx)+\iota_{\{(\vx,\vy):\vA\vx=\vb-\vB\vy\}}(\vx,\vy)\nonumber\\
=&\min_{\vy} g(\vy)+F(\vb-\vB\vy)\nonumber\\
=&\min_\vy\max_\vu g(\vy)+\langle -\vu, \vb-\vB\vy\rangle -F^*(-\vu)\label{for:saddle-pointb}\\
=&\min_\vy\max_\vu g(\vy)+\langle \vu, \vB\vy-\vb\rangle -f^*(-\vA^*\vu).\label{for:saddle-point}
\end{align}
A primal-dual algorithm for solving  \eqref{for:saddle-point} is described in Algorithm~\ref{alg:primal-dual}. Theorem~\ref{thm:ADM-PD} establishes the equivalence between Algorithms~\ref{alg:ADM-1} and \ref{alg:primal-dual}.

\begin{algorithm}[H]
\caption{Primal-dual formulation of ADM on Problem~\eqref{for:saddle-point}}
\label{alg:primal-dual}
\begin{algorithmic}
\STATE initialize $\vu_4^0$, $\vu_4^{-1}$, $\vy_4^0$, $\lambda>0$
\FOR {$k =0,1, \cdots$}
\STATE $\bar{\vu}_4^k=2\vu_4^{k}-\vu_4^{k-1}$
\STATE $\vy_4^{k+1}=\argmin\limits_\vy g(\vy)+(2\lambda)^{-1}\|\vB\vy-\vB\vy_4^k+\lambda\bar\vu_4^k\|_2^2$
\STATE $\vu_4^{k+1} =\argmin\limits_\vu f^*(-\vA^*\vu)-\langle\vu,\vB\vy_4^{k+1}-\vb\rangle+\lambda/2\|\vu-\vu_4^k\|_2^2$
\ENDFOR
\end{algorithmic}
\end{algorithm}

\begin{remark}
Paper~\cite{chambolle2011first} proposed a primal-dual algorithm for~\eqref{for:saddle-pointb} and obtained the connection between ADM and that primal-dual algorithm~\cite{esser2010a}: When $\vB=\vI$, ADM is equivalent to the primal-dual algorithm in~\cite{chambolle2011first}; When $\vB\neq\vI$, the primal-dual algorithm is a preconditioned ADM as an additional proximal term $\delta/2\|\vy-\vy_4^k\|_2^2-(2\lambda)^{-1}\|\vB\vy-\vB\vy_4^k\|_2^2$ is added to the subproblem for $\vy_4^{k+1}$. This is also a special case of inexact ADM in~\cite{DengYin2012a}. Our Algorithm \ref{alg:primal-dual} is a primal-dual algorithm that is equivalent to ADM in the general case.
\end{remark}

\begin{theorem}[Equivalence between Algorithms~\ref{alg:ADM-1} and \ref{alg:primal-dual}] \label{thm:ADM-PD}
Suppose that $\vA\vx_1^0=\lambda(\vu_4^0-\vu_4^{-1})+\vb-\vB\vy_4^0$ and $\vz_1^0=\vu_4^0$. Then, Algorithms~\ref{alg:ADM-1} and~\ref{alg:primal-dual} are equivalent with the identities:
\begin{align}\label{for:Axuub}
\vA\vx_1^k=\lambda(\vu_4^k-\vu_4^{k-1})+\vb-\vB\vy_4^k,\qquad \vz_1^k=\vu_4^k,
\end{align} for all $k>0$.
\end{theorem}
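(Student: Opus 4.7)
My plan is to prove (\ref{for:Axuub}) by induction on $k$. The base case $k=0$ is the assumed initialization. For the inductive step, suppose (\ref{for:Axuub}) holds at iteration $k$; I will establish it at $k+1$ in three substeps: match the $\vy$-subproblems, then deduce $\vz_1^{k+1}=\vu_4^{k+1}$ from optimality conditions, and finally read off the $\vA\vx$-identity from the dual update.

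First, I substitute the inductive hypothesis into the quadratic in Algorithm~\ref{alg:ADM-1}'s $\vy$-subproblem:
\begin{align*}
\vA\vx_1^k+\vB\vy-\vb+\lambda\vz_1^k
&= \bigl[\lambda(\vu_4^k-\vu_4^{k-1})+\vb-\vB\vy_4^k\bigr]+\vB\vy-\vb+\lambda\vu_4^k\\
&= \vB\vy-\vB\vy_4^k+\lambda(2\vu_4^k-\vu_4^{k-1})\\
&= \vB\vy-\vB\vy_4^k+\lambda\bar\vu_4^k.
\end{align*}
Hence the $\vy_1^{k+1}$-subproblem of Algorithm~\ref{alg:ADM-1} and the $\vy_4^{k+1}$-subproblem of Algorithm~\ref{alg:primal-dual} are identical, so their minimizers agree up to the kernel of $\vB$; in particular $\vB\vy_1^{k+1}=\vB\vy_4^{k+1}$ (a short convexity argument shows that $\vB\vy$ is constant across minimizers, because $\tfrac{1}{2\lambda}\|\vB\vy-\vc\|_2^2$ is strictly convex in $\vB\vy$).

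Second, I match the $\vx$-step of Algorithm~\ref{alg:ADM-1} with the $\vu$-step of Algorithm~\ref{alg:primal-dual} via their optimality conditions. Combining the $\vx_1^{k+1}$-subproblem's first-order condition with Algorithm~\ref{alg:ADM-1}'s dual update $\lambda\vz_1^{k+1}=\lambda\vz_1^k+\vA\vx_1^{k+1}+\vB\vy_1^{k+1}-\vb$ gives $\vzero\in\partial f(\vx_1^{k+1})+\vA^*\vz_1^{k+1}$, equivalently $\vx_1^{k+1}\in\partial f^*(-\vA^*\vz_1^{k+1})$. Applying $\vA$, substituting the dual update once more, and using the hypothesis $\vz_1^k=\vu_4^k$ together with $\vB\vy_1^{k+1}=\vB\vy_4^{k+1}$ from the previous step, I obtain
$$\lambda(\vz_1^{k+1}-\vu_4^k)+\vb-\vB\vy_4^{k+1}=\vA\vx_1^{k+1}\in\vA\,\partial f^*(-\vA^*\vz_1^{k+1}),$$
which is exactly the first-order optimality condition for Algorithm~\ref{alg:primal-dual}'s $\vu_4^{k+1}$-subproblem evaluated at $\vu=\vz_1^{k+1}$. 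Because that subproblem is strongly convex in $\vu$ (thanks to the proximal term $\tfrac{\lambda}{2}\|\vu-\vu_4^k\|_2^2$), its minimizer is unique, so $\vu_4^{k+1}=\vz_1^{k+1}$. The first identity at $k+1$ then follows by rewriting Algorithm~\ref{alg:ADM-1}'s dual update as $\vA\vx_1^{k+1}=\lambda(\vz_1^{k+1}-\vz_1^k)+\vb-\vB\vy_1^{k+1}$ and substituting the three equalities just proved.

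The main delicate point is non-uniqueness: Algorithm~\ref{alg:ADM-1}'s subproblems may have non-unique minimizers $\vx_1^{k+1},\vy_1^{k+1}$ when $\vA,\vB$ have nontrivial kernels, so the equivalence must be read through the invariants $\vA\vx_1^{k+1}$, $\vB\vy_1^{k+1}$, and $\vz_1^{k+1}$, which (as noted) are uniquely determined. A secondary technical point is the chain-rule step $\partial[f^*(-\vA^*\cdot)](\vu)\ni-\vA\,\vxi$ whenever $\vxi\in\partial f^*(-\vA^*\vu)$, which holds immediately from the definition of subdifferential and does not require any qualification for the inclusion direction needed above. Everything else is straightforward algebraic manipulation.
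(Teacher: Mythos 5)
Your proposal is correct and follows essentially the same route as the paper's proof: induction on \eqref{for:Axuub}, substituting the hypothesis to show the $\vy$-subproblems of Algorithms~\ref{alg:ADM-1} and~\ref{alg:primal-dual} coincide, then combining the $\vx$-step optimality condition with the dual update to identify $\vz_1^{k+1}$ with the unique minimizer of the $\vu_4^{k+1}$-subproblem, and finally reading off the $\vA\vx$ identity. The only differences are presentational: the paper phrases the conjugate step through $F^*=f^*\circ\vA^*$ while you work directly with $f^*(-\vA^*\cdot)$, and you are somewhat more explicit about non-uniqueness of $\vy_1^{k+1}$ (tracking the invariant $\vB\vy$), which the paper glosses over.
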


\begin{proof}By assumption, \eqref{for:Axuub} holds at iteration $k=0$. \\
Proof by induction. Suppose that \eqref{for:Axuub} holds at iteration $k\ge 0$. We shall establish \eqref{for:Axuub} at iteration $k+1$. From the first step of Algorithm~\ref{alg:ADM-1}, we have
\begin{align*}
\vy_1^{k+1}=&\argmin_\vy g(\vy)+(2\lambda)^{-1}\|\vA\vx_1^k+\vB\vy-\vb+\lambda\vz_1^k\|_2^2\\
=&\argmin_\vy g(\vy)+(2\lambda)^{-1}\|\lambda(\vu_4^k-\vu_4^{k-1})+\vB\vy-\vB\vy_4^{k}+\lambda\vu_4^k\|_2^2,
\end{align*}
which is the same as the first step in Algorithm~\ref{alg:primal-dual}. Thus we have $\vy_1^{k+1}=\vy_4^{k+1}$.

Combing the second and third steps of Algorithm~\ref{alg:ADM-1}, we have
\begin{align*}
\vzero \in \partial f(\vx_1^{k+1})+\lambda^{-1}\vA^*(\vA\vx_1^{k+1}+\vB\vy_1^{k+1}-\vb+\lambda\vz_1^k)=\partial f(\vx_1^{k+1}) + \vA^*\vz_1^{k+1}.
\end{align*}
Therefore,
\begin{align*}
&\vx_1^{k+1}\in\partial f^*(-\vA^*\vz_1^{k+1}) \\
\Longrightarrow~& \vA\vx_1^{k+1}\in\partial F^*(-\vz_1^{k+1})\\
\Longleftrightarrow~& \lambda (\vz_1^{k+1}-\vz_1^{k})+\vb-\vB\vy_1^{k+1}\in\partial F^*(-\vz_1^{k+1})\\
\Longleftrightarrow~&\vz_1^{k+1} =\argmin_{\vz} F^*(-\vz)-\langle\vz,\vB\vy_1^{k+1}-\vb\rangle+\lambda/2\|\vz-\vz_1^k\|_2^2\\
\Longleftrightarrow~&\vz_1^{k+1} =\argmin_{\vz} f^*(-\vA^*\vz)-\langle\vz,\vB\vy_4^{k+1}-\vb\rangle+\lambda/2\|\vz-\vu_4^k\|_2^2,
\end{align*}
where the last line is the second step of Algorithm~\ref{alg:primal-dual}. Therefore, we have $\vz_1^{k+1}=\vu_4^{k+1}$ and $\vA\vx_1^{k+1}=\lambda (\vz_1^{k+1}-\vz_1^{k})+\vb-\vB\vy_1^{k+1}=\lambda (\vu_4^{k+1}-\vu_4^{k})+\vb-\vB\vy_4^{k+1}$.
\end{proof}

\section{Equivalence of ADM for different orders}\label{sec:ADM-equal2}

In both problem \eqref{for:P3-ADMM} and Algorithm ~\ref{alg:ADM-1}, we can swap  $\vx$ and $\vy$ and obtain Algorithm \ref{alg:ADM-4}, which is still an algorithm of ADM. In general, the two algorithms are different. In this section, we show that for a certain type of functions $f$ (or $g$),  Algorithms~\ref{alg:ADM-1} and \ref{alg:ADM-4} become equivalent.

\begin{algorithm}[H]
\caption{ADM2 on~\eqref{for:P3-ADMM}}
\label{alg:ADM-4}
\begin{algorithmic}
\STATE initialize $\vy_5^0$, $\vz_5^0$, $\lambda>0$
\FOR {$k =0,1, \cdots$}
\STATE $\vx_5^{k+1} = \argmin\limits_\vx f(\vx)+(2\lambda)^{-1}\|\vA\vx+\vB\vy_5^{k}-\vb+\lambda\vz_5^k\|_2^2$
\STATE $\vy_5^{k+1} = \argmin\limits_\vy g(\vy)+(2\lambda)^{-1}\|\vA\vx_5^{k+1}+\vB\vy-\vb+\lambda\vz_5^k\|_2^2$
\STATE $\vz_5^{k+1} = \vz_5^k+\lambda^{-1}(\vA\vx_5^{k+1}+\vB\vy_5^{k+1}-\vb)$
\ENDFOR
\end{algorithmic}
\end{algorithm}

The assumption that we need is that either $\prox_{ F(\cdot)}$ or $\prox_{ G(\cdot)}$ is affine (cf. \eqref{for:subprobs} for the definitions of $F$ and $G$).
\begin{definition} A mapping $T$ is affine if, for any $\vr_1$ and $\vr_2$,
\begin{align*}
T\left(\frac{1}{2}\vr_1+\frac{1}{2}\vr_2\right)=\frac{1}{2}T\vr_1+\frac{1}{2}T\vr_2.
\end{align*}
\end{definition}
\begin{proposition} Let $\lambda>0$. The following statements are equivalent:
\begin{enumerate}
\item $\prox_{G(\cdot)}$ is affine;
\item $\prox_{\lambda G(\cdot)}$ is affine;
\item $a\prox_{G(\cdot)}\circ b\vI+c\vI$ is affine for any scalars $a$, $b$ and $c$;
\item $\prox_{G^*(\cdot)}$ is affine;
\item {$G$ is convex quadratic (or, affine or constant) and its domain $\dom(G)$  is either $\cG$ or the intersection of hyperplanes in $\cG$.}
\end{enumerate}
{In addition, if function $g$ is convex quadratic and its domain is the intersection of hyperplanes, then function $G$ defined in \eqref{for:subprobG} satisfies Part 5 above.}
\end{proposition}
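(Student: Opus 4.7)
The plan is to prove this six-way equivalence as a small cycle of implications where $(1) \Leftrightarrow (5)$ carries the substantive content and the remaining equivalences are soft consequences. First, $(1) \Leftrightarrow (4)$ is immediate from the Moreau identity $\prox_G + \prox_{G^*} = \vI$ recalled in the chapter's introduction, since $\vI$ is affine and the difference of two affine maps is affine. Next, $(1) \Rightarrow (3)$ is trivial because affine maps are closed under pre- and post-composition with scalings and translations, while $(3) \Rightarrow (1)$ is the special case $(a,b,c)=(1,1,0)$; likewise $(2) \Rightarrow (1)$ is the special case $\lambda = 1$. The reverse $(1) \Rightarrow (2)$ will fall out automatically once (5) is established, because $\lambda G$ preserves both the quadratic structure and the affine domain of $G$.

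For the easy direction $(5) \Rightarrow (1)$, I would write $G(\vy) = \tfrac{1}{2}\langle \vy, \vP\vy\rangle + \langle \vq, \vy\rangle + r$ on a domain of the form $\{\vy : \vN\vy = \vm\}$, and note that $\prox_G(\vx)$ is then the unique minimizer of a convex quadratic subject to linear equality constraints. The corresponding KKT system is linear in $\vx$, so its solution $\vy(\vx)$ is an affine function of $\vx$.

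The main work is $(1) \Rightarrow (5)$. Writing $\prox_G(\vx) = \vM\vx + \vn$, the first-order optimality condition for the proximal subproblem gives
\begin{equation*}
(\vI - \vM)\vx - \vn \;\in\; \partial G(\vM\vx + \vn) \quad \text{for every } \vx \in \cG.
\end{equation*}
Firm nonexpansiveness of $\prox_G$ forces $\vM$ to be self-adjoint with $\vzero \preceq \vM \preceq \vI$. Orthogonally decomposing $\cG = \text{range}(\vM) \oplus \text{null}(\vM)$, the image $\cR := \vn + \text{range}(\vM)$ is an affine subspace, and restricting $\vx$ to $\text{range}(\vM)$ makes $\vx \mapsto \vM\vx + \vn$ a bijection onto $\cR$. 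On $\cR$ the inclusion above becomes an equality of the form $\grad G(\vy) = \vQ\vy + \vc$ with $\vQ$ symmetric positive semidefinite; integrating along line segments yields $G$ convex quadratic on $\cR$. Along the complementary direction $\text{null}(\vM)$, the same inclusion forces $\partial G(\vy)$ to contain an entire linear subspace parallel to $\text{null}(\vM)$ for each $\vy \in \cR$, which by standard convex analysis is only possible if $\dom(G) \subseteq \cR$; hence $\dom(G)$ is an intersection of hyperplanes.

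For the ``in addition'' claim, suppose $g$ is convex quadratic with $\dom(g)$ an intersection of hyperplanes. Then $G(\vt) = \min_\vy\{g(\vy) : \vB\vy - \vb = \vt\}$ is the value function of a parametric linearly-constrained convex quadratic program, whose feasible set is the intersection of $\dom(g)$ with the affine set $\{\vy : \vB\vy - \vb = \vt\}$. Its effective domain $\vB(\dom g) - \vb$ is itself an intersection of hyperplanes, and writing down the KKT system shows that its optimal value is a convex quadratic function of $\vt$ on that domain, so $G$ satisfies Part 5. The step I expect to be the main obstacle is $(1) \Rightarrow (5)$: turning the single algebraic identity ``$\prox_G$ is affine'' into both the quadratic form of $G$ on its domain and the precise description of $\dom(G)$ as an intersection of hyperplanes. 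Treating the null-space directions of $\vM$ carefully, so as to rule out more general convex domains such as half-spaces or polyhedra with facets, is the delicate point.
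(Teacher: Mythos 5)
The paper states this proposition without proof, so there is no in-text argument to compare against; judged on its own, your overall plan (treat (1)$\Leftrightarrow$(5) as the substantive content and derive (2), (3), (4) as soft consequences) is the natural one, but two steps are wrong as written. First, in (1)$\Rightarrow$(5) you assert that firm nonexpansiveness of $\prox_{G(\cdot)}$ forces the linear part $\vM$ to be self-adjoint. That is false: in the plane, $\vM=\alpha\vR$ with $\vR$ a rotation by angle $\theta$ and $0<\alpha\le\cos\theta$ satisfies $\langle\vM\vv,\vv\rangle\ge\|\vM\vv\|_2^2$ for all $\vv$, hence is firmly nonexpansive, yet is not self-adjoint. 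The self-adjointness of $\vM$ is true, but it comes from the fact that $\prox_{G(\cdot)}$ is a gradient map, $\prox_{G(\cdot)}=\grad\bigl(\tfrac12\|\cdot\|_2^2-e_G\bigr)$ where $e_G$ is the Moreau envelope (equivalently, from cyclic monotonicity of $\partial G$); you must invoke that, not firm nonexpansiveness. Second, ``(2)$\Rightarrow$(1) is the special case $\lambda=1$'' does not parse: $\lambda$ is a fixed given parameter, not universally quantified, so it cannot be specialized. The repair is easy—apply your (1)$\Leftrightarrow$(5) argument to the function $\lambda G$, which is convex quadratic with the same domain if and only if $G$ is—but as written the cycle of implications is broken.

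Two smaller gaps. In (1)$\Rightarrow$(5) you conclude ``hence $\dom(G)$ is an intersection of hyperplanes'' from $\dom(G)\subseteq\cR$; containment in an affine set is not enough (a polyhedron sitting inside $\cR$ would also satisfy it). You need the reverse inclusion, which is immediate but should be said: $\prox_{G(\cdot)}(\vx)\in\dom(G)$ for every $\vx$ and the range of $\vx\mapsto\vM\vx+\vn$ is exactly $\cR$, so $\dom(G)=\cR$, which is either all of $\cG$ or an intersection of hyperplanes. In the ``in addition'' part, you should also justify that the parametric minimum defining $G$ in \eqref{for:subprobG} is attained where finite (a convex quadratic bounded below on an affine set attains its minimum, and properness is a standing assumption of the chapter), and note explicitly that because $\dom(g)$ is an affine set the parametric program has only equality constraints—this is precisely why the value function is a single convex quadratic on $\vB(\dom(g))-\vb$ rather than merely piecewise quadratic. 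Finally, since the paper defines affine maps by the midpoint property, a one-line remark that midpoint affinity plus continuity (nonexpansiveness) of $\prox_{G(\cdot)}$ justifies the representation $\prox_{G(\cdot)}(\vx)=\vM\vx+\vn$ with $\vM$ linear would close the remaining loophole; with these repairs your argument goes through.
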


\begin{proposition}If $\prox_{G(\cdot)}$ is affine, then the following holds for any $\vr_1$ and $\vr_2$:
\begin{align}\label{for:prxg}
\prox_{G(\cdot)}(2\vr_1-\vr_2)=2\prox_{G(\cdot)}\vr_1-\prox_{G(\cdot)}\vr_2.
\end{align}
\end{proposition}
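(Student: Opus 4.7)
The plan is to observe that the proposition is a direct consequence of the definition of affine mapping (midpoint preservation), via a clever choice of the two midpoint arguments. Specifically, I would write $\vr_1$ as the midpoint of $2\vr_1 - \vr_2$ and $\vr_2$:
\[
\vr_1 = \tfrac{1}{2}(2\vr_1 - \vr_2) + \tfrac{1}{2}\vr_2.
\]

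Then, applying the midpoint-preservation property of $\prox_{G(\cdot)}$ to the two points $2\vr_1 - \vr_2$ and $\vr_2$, I obtain
\[
\prox_{G(\cdot)}(\vr_1) = \tfrac{1}{2}\prox_{G(\cdot)}(2\vr_1 - \vr_2) + \tfrac{1}{2}\prox_{G(\cdot)}(\vr_2),
\]
and rearranging yields exactly \eqref{for:prxg}. There is no real obstacle here: the proof is essentially a one-line algebraic manipulation once the right midpoint decomposition is spotted. An alternative (slightly longer) route would be to first show from the definition that any midpoint-preserving map on a Hilbert space has the form $T(\vx) = L(\vx) + T(\vzero)$ for an additive (hence $\mathbb{Q}$-linear) $L$, and then verify both sides of \eqref{for:prxg} coincide; but this detour is unnecessary for the stated identity. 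I would therefore present only the short direct argument.
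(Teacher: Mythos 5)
Your argument is correct and is essentially identical to the paper's own proof: the paper sets $\bar{\vr}_1=2\vr_1-\vr_2$, $\bar{\vr}_2=\vr_2$, applies the midpoint property at $\vr_1=\tfrac{1}{2}\bar{\vr}_1+\tfrac{1}{2}\bar{\vr}_2$, and rearranges, exactly as you do. Nothing further is needed.
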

\begin{proof}
Equation \eqref{for:prxg} is obtained by defining $\bar{\vr}_1=2\vr_1-\vr_2$ and $\bar{\vr}_2:=\vr_2$ and rearranging
\begin{align*}\prox_{G(\cdot)}\left(\frac{1}{2}\bar{\vr}_1 +\frac{1}{2}\bar{\vr}_2\right) =\frac{1}{2}\prox_{G(\cdot)}\bar{\vr}_1+\frac{1}{2}\prox_{G(\cdot)}\bar{\vr}_2. \end{align*}
\end{proof}

\begin{theorem}[Equivalence of Algorithms~\ref{alg:ADM-1} and~\ref{alg:ADM-4}] \label{thm:ADMM-equal2}
\ \\
\begin{enumerate}
\item \label{swaporder_equiv_part1}Assume that $\prox_{\lambda G(\cdot)}$ is affine. Given the sequences $\vy_5^{k}$, $\vz_5^{k}$, and $\vx_5^{k}$ of Algorithm~\ref{alg:ADM-4}, if $\vy_5^0$ and $\vz_5^0$ satisfy $-\vz_5^0\in \partial G(\vB\vy_5^0-\vb)$, then
we can initialize Algorithm~\ref{alg:ADM-1} with $\vx_1^0=\vx_5^1$ and $\vz_1^0=\vz_5^0+\lambda^{-1}(\vA\vx_5^{1}+\vB\vy_5^{0}-\vb)$, and recover the sequences  $\vx_1^k$ and $\vz_1^k$ of Algorithm~\ref{alg:ADM-1} through
\begin{subequations}\label{for:ADM-order}
\begin{align}
\vx_1^k&=\vx_5^{k+1},\\
\vz_1^k&=\vz_5^{k}+\lambda^{-1}(\vA\vx_5^{k+1}+\vB\vy_5^{k}-\vb).
\end{align}
\end{subequations}
\item Assume that $\prox_{\lambda F(\cdot)}$ is affine. Given the sequences  $\vx_1^{k}$, $\vz_1^{k}$, and $\vy_1^{k}$ of Algorithm~\ref{alg:ADM-1}, if $\vx_1^0$ and $\vz_1^0$ satisfy $-\vz_1^0\in \partial F(\vA\vx_1^0)$, then we can initialize Algorithm~\ref{alg:ADM-4} with $\vy_5^0=\vy_1^1$ and $\vz_5^0=\vz_1^0+\lambda^{-1}(\vA\vx_1^{0}+\vB\vy_1^{1}-\vb)$, and recover the sequences $\vy_5^k$ and $\vz_5^k$ of Algorithm~\ref{alg:ADM-4} through
\begin{subequations}\label{for:ADM-order2}
\begin{align}
\vy_5^k&=\vy_1^{k+1},\\
\vz_5^k&=\vz_1^{k}+\lambda^{-1}(\vA\vx_1^{k}+\vB\vy_1^{k+1}-\vb).
\end{align}
\end{subequations}
\end{enumerate}
\end{theorem}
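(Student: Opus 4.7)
My plan is to prove only Part~\ref{swaporder_equiv_part1} (Part 2 is symmetric, with the roles of $f,\vA$ and $g,\vB$ interchanged and the update order reversed). I would work in the master-problem coordinates of \eqref{for:P4-ADMM}, since by Part~1 of Theorem~\ref{thm:ADMM-equal} Algorithm~\ref{alg:ADM-1} is faithfully tracked by Algorithm~\ref{alg:ADM-2} through $\vs_2^k=\vA\vx_1^k$, $\vt_2^k=\vB\vy_1^k-\vb$, $\vz_2^k=\vz_1^k$, and the analogous identities tie Algorithm~\ref{alg:ADM-4} to a ``swapped'' master algorithm that updates $\vs$ before $\vt$. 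In these coordinates, both algorithms become repeated applications of $\prox_{\lambda F}$ and $\prox_{\lambda G}$ to affine combinations of the current state, and the claim \eqref{for:ADM-order} becomes the much cleaner identity $\vs_1^k=\vs_5^{k+1}$ together with $\vz_1^k = \vz_5^k + \lambda^{-1}(\vs_5^{k+1}+\vt_5^k)$.

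The proof is by induction on $k$, with the base case being the initialization hypothesis. For the inductive step I would first show that the side condition $-\vz_5^0\in\partial G(\vB\vy_5^0-\vb)$ propagates: from the optimality condition of the $\vt_5^{k+1}$-subproblem together with the update $\lambda\vz_5^{k+1}=\lambda\vz_5^k+\vs_5^{k+1}+\vt_5^{k+1}$, a direct subtraction gives $-\vz_5^{k+1}\in\partial G(\vt_5^{k+1})$. This is the crucial invariant, because it rewrites as the fixed-point identity $\vt_5^k=\prox_{\lambda G}(\vt_5^k-\lambda\vz_5^k)$, which I will need in a moment.

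Next, I would unfold the first step of Algorithm~\ref{alg:ADM-1} in master form, substitute the inductive hypothesis for $\vs_1^k$ and $\vz_1^k$, and obtain
\begin{align*}
\vt_1^{k+1} = \prox_{\lambda G}\bigl(-2\vs_5^{k+1}-\vt_5^k-\lambda\vz_5^k\bigr).
\end{align*}
Writing the argument as $2(-\vs_5^{k+1}-\lambda\vz_5^k) - (\vt_5^k-\lambda\vz_5^k)$ and invoking the affine property of $\prox_{\lambda G}$ through equation \eqref{for:prxg} together with the fixed-point identity above, this collapses to $\vt_1^{k+1}=2\vt_5^{k+1}-\vt_5^k$. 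Substituting this into the second step of Algorithm~\ref{alg:ADM-1} and simplifying, the input to $\prox_{\lambda F}$ becomes exactly $-\vt_5^{k+1}-\lambda\vz_5^{k+1}$, which is the input to the $\vs_5^{k+2}$-subproblem of Algorithm~\ref{alg:ADM-4}; hence $\vs_1^{k+1}=\vs_5^{k+2}$. Finally the $\vz$-update gives $\vz_1^{k+1}=\vz_5^{k+1}+\lambda^{-1}(\vs_5^{k+2}+\vt_5^{k+1})$ after telescoping, closing the induction.

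The main obstacle is the second step: seeing that the correct affine decomposition of the argument to $\prox_{\lambda G}$ is $2(-\vs_5^{k+1}-\lambda\vz_5^k)-(\vt_5^k-\lambda\vz_5^k)$, so that one factor is already the next $\vt_5^{k+1}$ and the other factor is neutralized by the invariant $-\vz_5^k\in\partial G(\vt_5^k)$. Without this invariant the second term in the decomposition is an uncontrolled quantity, which is precisely why the side condition on $(\vy_5^0,\vz_5^0)$ is indispensable and why the conclusion identifies $\vx_1^k$ with $\vx_5^{k+1}$ (shifted by one) rather than $\vx_5^k$ itself. Once Part~\ref{swaporder_equiv_part1} is established, Part~2 follows by the same argument applied to the symmetric pair, using Part~2 of Theorem~\ref{thm:ADMM-equal} together with the affine property of $\prox_{\lambda F}$ in place of $\prox_{\lambda G}$.
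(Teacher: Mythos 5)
Your proposal is correct and takes essentially the same route as the paper's proof: induction on $k$, the affine identity \eqref{for:prxg} applied to a decomposition $2\vr_1-\vr_2$ whose first term is the swapped algorithm's current $G$-prox input and whose second term is neutralized by the initialization condition, then matching the data of the next $F$-subproblem and closing with the multiplier update. Your invariant $-\vz_5^k\in\partial G(\vt_5^k)$, i.e.\ $\vt_5^k=\prox_{\lambda G(\cdot)}(\vt_5^k-\lambda\vz_5^k)$, is exactly the paper's device of the fictitious $\vz_5^{-1}$ (since $-\vA\vx_5^{k}-\lambda\vz_5^{k-1}=\vB\vy_5^{k}-\vb-\lambda\vz_5^{k}$), so up to the change of variables $\vs=\vA\vx$, $\vt=\vB\vy-\vb$ the two arguments coincide.
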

\begin{proof} We prove Part \ref{swaporder_equiv_part1} only by induction. (The proof for the other part is similar.)
The initialization of Algorithm~\ref{alg:ADM-1} clearly follows~\eqref{for:ADM-order} at $k=0$. Suppose that~\eqref{for:ADM-order} holds at $k\ge 0$. We shall show that~\eqref{for:ADM-order} holds at $k+1$. We first show from the affine property of $\prox_{\lambda G(\cdot)}$ that \begin{align}\label{for:thm2-y}
\vB\vy_1^{k+1}=2\vB\vy_5^{k+1}-\vB\vy_5^{k}.
\end{align}
The optimization subproblems for $\vy_1$ and $\vy_5$ in Algorithms~\ref{alg:ADM-1} and~\ref{alg:ADM-4}, respectively, are as follows:
\begin{align*}
\vy_1^{k+1} &= \argmin\limits_\vy g(\vy)+(2\lambda)^{-1}\|\vA\vx_1^{k}+\vB\vy-\vb+\lambda \vz_1^k\|_2^2,\\
\vy_5^{k+1} &= \argmin\limits_\vy g(\vy) +(2\lambda)^{-1}\|\vA\vx_5^{k+1}+\vB\vy-\vb+\lambda \vz_5^k\|_2^2.
\end{align*}
Following the definition of $G$ in \eqref{for:subprobs}, we have
\begin{subequations}\label{for:bBy}
\begin{align}
\vB\vy_1^{k+1}&-\vb = \prox_{\lambda G(\cdot)}(-\vA\vx_1^{k}-\lambda \vz_1^k),\\
\vB\vy_5^{k+1}&-\vb = \prox_{\lambda G(\cdot)}(-\vA\vx_5^{k+1}-\lambda \vz_5^k),\\
\vB\vy_5^{k}\quad&-\vb = \prox_{\lambda G(\cdot)}(-\vA\vx_5^{k}-\lambda \vz_5^{k-1}).\label{for:bByc}
\end{align}
\end{subequations}
The third step of Algorithm~\ref{alg:ADM-4} is
\begin{align}\label{for:alg4step3}\vz_5^{k}= \vz_5^{k-1}+\lambda^{-1}(\vA\vx^k_5+\vB\vy^k_5-\vb).\end{align}
(Note that for $k=0$, the assumption $-\vz_5^0\in \partial G(\vB\vy_5^0-\vb)$ ensures the existence of $\vz_5^{-1}$ in \eqref{for:bByc} and \eqref{for:alg4step3}.)
Then, \eqref{for:ADM-order} and \eqref{for:alg4step3}  give us
\begin{align*}
\vA\vx_1^{k}+\lambda\vz_1^{k}&\overset{\eqref{for:ADM-order}}{=}\vA\vx_5^{k+1}+\lambda\vz_5^{k}+\vA\vx_5^{k+1}+\vB\vy_5^{k}-\vb\\
&\ =\ 2(\vA\vx_5^{k+1}+\lambda\vz_5^{k})-(\lambda\vz_5^{k}-\vB\vy_5^{k}+\vb)\\
&\overset{\eqref{for:alg4step3}}{=}\ 2(\vA\vx_5^{k+1}+\lambda\vz_5^{k})-(\vA\vx_5^{k}+\lambda\vz_5^{k-1}).
\end{align*}
Since  $\prox_{\lambda G(\cdot)}$ is affine, we have \eqref{for:prxg}. Once we plug in \eqref{for:prxg}: $\vr_1=-\vA\vx_5^{k+1}-\lambda\vz_5^{k}$, $\vr_2=-\vA\vx_5^{k}-\lambda\vz_5^{k-1}$, and $2\vr_1-\vr_2 = -\vA\vx_1^{k}-\lambda\vz_1^k$ and then apply  \eqref{for:bBy}, we obtain~\eqref{for:thm2-y}.

Next, the third step of Algorithm~\ref{alg:ADM-4} and \eqref{for:thm2-y} give us
\begin{align*}
\vB\vy_1^{k+1}-\vb+\lambda \vz_1^k &\overset{\eqref{for:thm2-y}}{=} 2(\vB\vy_5^{k+1}-\vb)-(\vB\vy_5^{k}-\vb)+\lambda\vz_5^{k}+(\vA\vx_5^{k+1}+\vB\vy_5^{k}-\vb)\\
&\ =\ (\vB\vy_5^{k+1}-\vb)+\lambda\vz_5^{k}+(\vA\vx_5^{k+1}+\vB\vy_5^{k+1}-\vb)\\
&\ =\ (\vB\vy_5^{k+1}-\vb)+\lambda\vz_5^{k+1}.
\end{align*}
This identity shows that the updates of  $\vx_1^{k+1}$ and $\vx_5^{k+2}$ in Algorithms~\ref{alg:ADM-1} and~\ref{alg:ADM-4}, respectively, have identical data, and therefore, we recover  $\vx_1^{k+1}=\vx_5^{k+2}$.

Lastly, from the third step of Algorithm~\ref{alg:ADM-1} and the identities above, it follows that
\begin{align*}
\vz_1^{k+1}&=\vz_1^k+\lambda^{-1}(\vA\vx_1^{k+1}+\vB\vy_1^{k+1}-\vb)\\
&=\vz_1^k+\lambda^{-1}\left(\vA\vx_5^{k+2}+(\vB\vy_5^{k+1}-\vb+\lambda\vz_5^{k+1}-\lambda\vz_1^k)\right)\\
&=\vz_5^{k+1}+\lambda^{-1}(\vA\vx_5^{k+2}+\vB\vy_5^{k+1}-\vb).
\end{align*}
Therefore, we obtain \eqref{for:ADM-order} at $k+1$.
\end{proof}
\begin{remark} We can avoid the technical condition $-\vz_5^0\in \partial G(\vB\vy_5^0-\vb)$ on Algorithm~\ref{alg:ADM-4} in Part~\ref{swaporder_equiv_part1} of Theorem \ref{thm:ADMM-equal2}. When it does not hold, we can use the always-true relation $-\vz_5^1\in \partial G(\vB\vy_5^1-\vb)$ instead; correspondingly, we shall add 1 iteration to the iterates of Algorithm~\ref{alg:ADM-4}, namely,  initialize Algorithm~\ref{alg:ADM-1} with $\vx_1^0=\vx_5^2$ and $\vz_1^0=\vz_5^1+\lambda^{-1}(\vA\vx_5^{2}+\vB\vy_5^{1}-\vb)$ and recover the sequences  $\vx_1^k$ and $\vz_1^k$ of Algorithm~\ref{alg:ADM-1} through
\begin{subequations}\label{for:ADM-order_offset}
\begin{align}
\vx_1^k&=\vx_5^{k+2},\\
\vz_1^k&=\vz_5^{k+1}+\lambda^{-1}(\vA\vx_5^{k+2}+\vB\vy_5^{k+1}-\vb).
\end{align}
\end{subequations}
Similar arguments apply to the other part of Theorem \ref{thm:ADMM-equal2}.
\end{remark}

\section{Primal-dual equivalence of RPRS}
\label{sec:DRS}


In this section, we consider the following convex problem:
\begin{align}\tag{P3}\label{for:P1-DRS}
\Min_\vx \quad f(\vx)+g(\vA\vx),
\end{align}
and its corresponding Lagrangian dual
\begin{align}\tag{D3}\label{for:D1-DRS}
\Min_\vv  \quad f^*(\vA^*\vv) + g^*(-\vv).
\end{align}
In addition, we introduce another primal-dual pair equivalent to~\eqref{for:P1-DRS}-\eqref{for:D1-DRS}:
\begin{align}\tag{P4}\label{for:P2-DRS}
\Min_\vy &\quad (f^*\circ \vA^*)^*(\vy)+g(\vy),\\
 \tag{D4}\label{for:D2-DRS}
\Min_\vu  &\quad f^*(\vu) + (g\circ \vA)^*(-\vu).
\end{align}
Lemma~\ref{lemma:Equivalency} below will establish the equivalence between the two primal-dual pairs.

\begin{remark}When $\vA=\vI$, we have $(f^*\circ \vA^*)^*=f$, and problem~\eqref{for:P1-DRS} is exactly the same as problem~\eqref{for:P2-DRS}. Similarly, problem~\eqref{for:D1-DRS} is exactly the same as problem~\eqref{for:D2-DRS}.
\end{remark}

\begin{lemma}\label{lemma:Equivalency}
Problems~\eqref{for:P1-DRS} and~\eqref{for:P2-DRS} are equivalent in the following sense:
\begin{itemize}
\item Given any solution $\vx^*$ to~\eqref{for:P1-DRS}, $\vy^*=\vA\vx^*$ is a solution to~\eqref{for:P2-DRS},
\item Given any solution $\vy^*$ to~\eqref{for:P2-DRS}, $\vx^*\in\argmin\limits_{\vx:\vA\vx=\vy^*} f(\vx)$ is a solution to~\eqref{for:P1-DRS}.
\end{itemize}
The equivalence between problems~\eqref{for:D1-DRS} and~\eqref{for:D2-DRS} is similar:
\begin{itemize}
\item Given any solution $\vv^*$ to~\eqref{for:D1-DRS}, $\vA^*\vv^*$ is a solution to~\eqref{for:D2-DRS},
\item Given any solution $\vu^*$ to~\eqref{for:D2-DRS}, $\vv^*\in\argmin\limits_{\vv:\vA^*\vv=\vu^*} g^*(-\vv)$ is a solution to~\eqref{for:D1-DRS}.
\end{itemize}
\end{lemma}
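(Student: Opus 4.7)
The plan is to realize both equivalences as two applications of the infimal postcomposition identity from Lemma~\ref{lemma:conjuga_infimal}. The key is to rewrite the ``hard'' conjugates $(f^*\circ \vA^*)^*$ and $(g\circ\vA)^*(-\cdot)$ as infima of linearly constrained problems, which then collapse the formulations in pairs.

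First, for the primal equivalence, I would apply Lemma~\ref{lemma:conjuga_infimal} with the linear map $\vL(\vx)=\vA\vx$ and the function $f$. That lemma gives $(\vL\triangleright f)^*(\vw)=f^*(\vA^*\vw)$, i.e.\ $(\vL\triangleright f)^*=f^*\circ\vA^*$. Because $f$ is proper, closed, and convex and $\vL$ is linear, $\vL\triangleright f$ is a proper convex function whose closure satisfies the biconjugation identity under Assumption~1, yielding
\[
(f^*\circ \vA^*)^*(\vy)=(\vL\triangleright f)(\vy)=\inf_{\vx:\vA\vx=\vy} f(\vx).
\]
Substituting this into~\eqref{for:P2-DRS}, problem~\eqref{for:P2-DRS} becomes the joint minimization $\min_{\vx,\vy:\vA\vx=\vy}f(\vx)+g(\vy)$, which is exactly~\eqref{for:P1-DRS} once $\vy$ is eliminated via $\vy=\vA\vx$. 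The two optimal values coincide, and the stated bijection between solutions is an immediate consequence: if $\vx^*$ solves~\eqref{for:P1-DRS}, then $\vy^*=\vA\vx^*$ attains the common optimum in~\eqref{for:P2-DRS}; conversely, any $\vx^*\in\argmin_{\vA\vx=\vy^*}f(\vx)$ achieves $f(\vx^*)+g(\vA\vx^*)=(f^*\circ\vA^*)^*(\vy^*)+g(\vy^*)$, which is optimal in~\eqref{for:P1-DRS}.

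For the dual equivalence I would run the symmetric argument on $g^*$ instead of $f$. Set $\tilde g(\vv):=g^*(-\vv)$; a direct conjugate computation gives $\tilde g^*(\vw)=g(-\vw)$. Apply Lemma~\ref{lemma:conjuga_infimal} to $\tilde g$ with the linear map $\vM(\vv)=\vA^*\vv$ to obtain
\[
(\vM\triangleright \tilde g)^*(\vw)=\tilde g^*(\vA\vw)=g(-\vA\vw).
\]
Taking biconjugates and using the change of variable $\vx'=-\vx$ in the supremum definition, I would verify
\[
(g\circ\vA)^*(-\vu)=\sup_{\vx'}\bigl(\langle\vu,\vx'\rangle-g(-\vA\vx')\bigr)=(\vM\triangleright\tilde g)(\vu)=\inf_{\vv:\vA^*\vv=\vu}g^*(-\vv).
\]
Plugging this into~\eqref{for:D2-DRS} turns it into $\min_{\vu,\vv:\vA^*\vv=\vu}f^*(\vu)+g^*(-\vv)$, which reduces to~\eqref{for:D1-DRS} by eliminating $\vu=\vA^*\vv$. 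The correspondence of solutions then follows exactly as on the primal side.

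The only delicate point is the biconjugation step $(\vL\triangleright f)^{**}=\vL\triangleright f$ (and its dual counterpart), which requires the infimal postcomposition to be closed, i.e.\ that the infimum in $\inf_{\vA\vx=\vy}f(\vx)$ behaves well; this is where I would invoke Assumption~1 together with Assumption~2 (existence of saddle-point solutions), which rules out the pathological cases where the infimum is not attained. Everything else is bookkeeping: two applications of Lemma~\ref{lemma:conjuga_infimal}, a change of variable in one supremum, and the observation that a joint minimization over $(\vx,\vy)$ with a linear constraint collapses to a single minimization in each direction.
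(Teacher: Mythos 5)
Your proof is correct in substance, but it follows a genuinely different route from the paper. You work at the level of objective functions: using Lemma~\ref{lemma:conjuga_infimal} and biconjugation you identify $(f^*\circ\vA^*)^*$ with the infimal postcomposition $\vA\triangleright f$, i.e.\ $(f^*\circ\vA^*)^*(\vy)=\inf_{\vx:\vA\vx=\vy}f(\vx)$ (and the analogous identity $(g\circ\vA)^*(-\vu)=\inf_{\vv:\vA^*\vv=\vu}g^*(-\vv)$ on the dual side), so that \eqref{for:P2-DRS} is revealed as the marginalization of \eqref{for:P1-DRS} over the constraint $\vA\vx=\vy$, and the solution maps drop out by comparing optimal values. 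The paper instead argues pointwise through optimality conditions: it splits $\vzero\in\partial f(\vx^*)+\vA^*\partial g(\vA\vx^*)$ with an auxiliary $\vq$, uses the conjugate-subgradient inversion $\vp\in\partial f(\vx)\Leftrightarrow\vx\in\partial f^*(\vp)$ and the chain-rule inclusion $\vA\partial f^*(\vA^*\vq)\subseteq\partial(f^*\circ\vA^*)(\vq)$ in one direction, and in the other direction invokes strong duality for the auxiliary problem $\min\{f(\vx):\vA\vx=\vy^*\}$ to manufacture $\vA^*\vq\in\partial f(\vx^*)$. Your value-based argument is shorter and treats the two directions symmetrically, but it hinges on the biconjugation step $(\vA\triangleright f)^{**}=\vA\triangleright f$, i.e.\ closedness of the image function and attainment of the inner infimum; be aware that Assumption~2 (existence of saddle points) does not literally deliver this, so you are implicitly imposing a qualification there. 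This is comparable to—though stated more explicitly than—the paper's own implicit assumptions (the existence of the splitting vector $\vq$ and the zero duality gap of the auxiliary problem), and both proofs implicitly need $\argmin_{\vx:\vA\vx=\vy^*}f(\vx)$ to be nonempty, which is built into the statement. The paper's pointwise argument has the advantage that it only needs these relations at the particular solutions, not the function identity $(f^*\circ\vA^*)^*=\vA\triangleright f$ globally; your argument has the advantage of making the structural reason for the equivalence (marginalization under a linear constraint) transparent and of reusing Lemma~\ref{lemma:conjuga_infimal} verbatim.
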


\begin{proof}
We prove only the equivalence of~\eqref{for:P1-DRS} and~\eqref{for:P2-DRS}, the proof for the equivalence of~\eqref{for:D1-DRS} and~\eqref{for:D2-DRS} is similar.

Part 1: If $\vx^*$ is a solution to~\eqref{for:P1-DRS}, we have $\vzero\in\partial f(\vx^*)+\vA^*\partial g(\vA\vx^*)$. Assume that there exists $\vq$ such that $-\vq\in\partial g(\vA\vx^*)$ and $\vA^*\vq\in\partial f(\vx^*)$. Then we have
\begin{align*}
\vA^*\vq\in\partial f(\vx^*) \Longleftrightarrow & \vx^*\in\partial f^*(\vA^*\vq)\\
\Longrightarrow & \vA\vx^* \in \vA\partial f^*(\vA^*\vq)= \partial (f^*\circ \vA^*)(\vq)\\
\Longleftrightarrow & \vq \in \partial(f^*\circ \vA^*)^*(\vA\vx^*).
\end{align*}
Therefore,
\begin{align*}
\vzero \in  \partial(f^*\circ \vA^*)^*(\vA\vx^*) +\partial g(\vA\vx^*)
\end{align*}
and $\vA\vx^*$ is a solution to~\eqref{for:P2-DRS}.

Part 2: If $\vy^*$ is a solution to~\eqref{for:P2-DRS}, the optimality condition gives us
\begin{align*}
\vzero \in  \partial(f^*\circ \vA^*)^*(\vy^*) +\partial g(\vy^*).
\end{align*}
Assume that there exists $\vq$ such that $-\vq\in\partial g(\vy^*)$ and $\vq\in \partial(f^*\circ \vA^*)^*(\vy^*)$. Then we have
\begin{align}\label{for:PD-equal-1}
\vq \in \partial(f^*\circ \vA^*)^*(\vy^*)\Longleftrightarrow &\vy^* \in  \partial (f^*\circ \vA^*)(\vq).
\end{align}
Consider the following optimization problem for finding $\vx^*$ from $\vy^*$
\begin{align*}
\Min_\vx f(\vx)\qquad\mbox{subject to }\vA\vx=\vy^*,
\end{align*}
and the corresponding dual problem
\begin{align*}
\Max_\vv -f^*(\vA^*\vv)+\langle\vv,\vy^*\rangle.
\end{align*}
It is easy to obtain from~\eqref{for:PD-equal-1} that $\vq$ is a solution of the dual problem. The optimal duality gap is zero and the strong duality gives us
\begin{align}
f(\vx^*)=f(\vx^*)-\langle \vq,\vA\vx^*-\vy^*\rangle =  -f^*(\vA^*\vq)+\langle \vq,\vy^*\rangle.
\end{align}
Thus $\vx^*$ is a solution of $\Min\limits_\vx f(\vx)-\langle \vA^*\vq, \vx\rangle$ and
\begin{align}
\vA^*\vq \in \partial f(\vx^*)\Longleftrightarrow \vzero\in \partial f(\vx^*)-\vA^*\vq.
\end{align}
Because $-\vq\in\partial g(\vy^*)=\partial g(\vA\vx^*)$,
\begin{align}
\vzero\in \partial f(\vx^*)+\vA^*\partial\vg(\vA\vx^*)= \partial f(\vx^*)+\partial(\vg\circ \vA)(\vx^*)
\end{align}
Therefore $\vx^*$ is a solution of~\eqref{for:P1-DRS}.
\end{proof}

Next we will show the equivalence between the RPRS to the primal and dual problems:
\begin{align*}
\boxed{\mbox{RPRS on~\eqref{for:P1-DRS}}}\Longleftrightarrow \boxed{\mbox{RPRS on~\eqref{for:D2-DRS}}}\\
\boxed{\mbox{RPRS on~\eqref{for:P2-DRS}}}\Longleftrightarrow \boxed{\mbox{RPRS on~\eqref{for:D1-DRS}}}
\end{align*}
We describe the RPRS on~\eqref{for:P1-DRS} in Algorithm~\ref{alg:DRS-P4}, and the RPRS on other problems can be obtained in the same way.
\begin{algorithm}[H]
\caption{RPRS on~\eqref{for:P1-DRS}}
\label{alg:DRS-P4}
\begin{algorithmic}
\STATE initialize $\vw^0$, $\lambda>0$, $0<\alpha\leq1$.
\FOR {$k =0,1, \cdots$}
\STATE $\vx^{k+1}  = \prox_{\lambda f(\cdot)} \vw^k$
\STATE $\vw^{k+1}=(1-\alpha) \vw^k+\alpha(2\prox_{\lambda g\circ \vA(\cdot)}-\vI)(2\vx^{k+1}-\vw^k)$
\ENDFOR
\end{algorithmic}
\end{algorithm}

\begin{theorem}\label{thm:DRS-equal}[Primal-dual equivalence of RPRS] RPRS on~\eqref{for:P1-DRS} is equivalent to RPRS on~\eqref{for:D2-DRS}. RPRS on~\eqref{for:P2-DRS} is equivalent to RPRS on~\eqref{for:D1-DRS}.
\end{theorem}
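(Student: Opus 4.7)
The strategy is to express RPRS as the fixed-point iteration of a composition of reflection operators and then transport one iteration to the other via Moreau's decomposition. Writing $R_{\mu h} := 2\prox_{\mu h} - \vI$ for the reflection associated with $\mu h$, Algorithm~\ref{alg:DRS-P4} can be rewritten compactly as
\begin{align*}
\vw^{k+1} = \bigl[(1-\alpha)\vI + \alpha\, R_{\lambda (g\circ \vA)} \circ R_{\lambda f}\bigr]\vw^k,
\end{align*}
since $2\vx^{k+1}-\vw^k = R_{\lambda f}(\vw^k)$. The RPRS iteration on \eqref{for:D2-DRS} (with some parameter $\mu>0$ to be chosen) reads
\begin{align*}
\tilde\vw^{k+1} = \bigl[(1-\alpha)\vI + \alpha\, R_{\mu (g\circ \vA)^*(-\cdot)} \circ R_{\mu f^*}\bigr]\tilde\vw^k.
\end{align*}

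The bridge between the two iterations is Moreau's identity: for every proper closed convex $h$ and $\lambda>0$,
\begin{align*}
\prox_{\lambda h}(\vw) + \lambda\, \prox_{h^*/\lambda}(\vw/\lambda) = \vw,
\end{align*}
which immediately gives the reflection identity $R_{\lambda h}(\vw) = -\lambda\, R_{h^*/\lambda}(\vw/\lambda)$. This forces the parameter pairing $\mu = 1/\lambda$, and suggests the change of variable $\tilde\vw^k = -\vw^k/\lambda$. Plugging this substitution into the dual iteration and applying the reflection identity to $f\leftrightarrow f^*$ handles the inner reflection. To handle the outer reflection one uses the elementary fact $\prox_{\mu h(-\cdot)}(\vx) = -\prox_{\mu h}(-\vx)$ applied to $h=(g\circ \vA)^*$, which absorbs the $-\cdot$ in $(g\circ\vA)^*(-\cdot)$, and then invokes Moreau's identity a second time to convert $R_{\mu (g\circ \vA)^*}$ into $R_{\lambda (g\circ \vA)}$. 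A straightforward induction on $k$ then yields $\tilde\vw^k = -\vw^k/\lambda$ for all $k\ge 0$, establishing the equivalence between RPRS on \eqref{for:P1-DRS} and RPRS on \eqref{for:D2-DRS}.

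For the second claim, the identical argument applies after replacing the pair $(f,\,g\circ \vA)$ with $((f^*\circ \vA^*)^*,\,g)$. Lemma~\ref{lemma:Equivalency} certifies that these two primal--dual pairs share the same minimizers, so it suffices to establish equivalence at the level of the fixed-point iteration itself; the Moreau-identity computation is unchanged.

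The main obstacle will be tracking three simultaneous sources of sign and scaling that enter through Moreau's identity: the factor $-\lambda$ in front of the reflection, the $1/\lambda$ inside the prox argument, and the additional $-\cdot$ appearing in $(g\circ \vA)^*(-\cdot)$. Once the correct substitution $\tilde\vw^k = -\vw^k/\lambda$ and the parameter pairing $\mu=1/\lambda$ are locked in, verifying that each of the two reflections transforms as required is a short algebraic manipulation performed iterate by iterate. The remainder of the proof is routine induction.
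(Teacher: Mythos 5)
Your overall strategy is the same as the paper's: write RPRS as the fixed-point iteration of a composition of reflections, pair the dual parameter as $\lambda^{-1}$, transport iterates via the Moreau decomposition (the paper packages the needed identity as Lemma~\ref{lemma:2}), and induct on $k$. However, the change of variables you ``lock in'' has the wrong sign, and with it the induction cannot be closed. For the dual iteration exactly as you wrote it, $\tilde\vw^{k+1}=\bigl[(1-\alpha)\vI+\alpha R_{\lambda^{-1}(g\circ\vA)^*(-\cdot)}\circ R_{\lambda^{-1}f^*}\bigr]\tilde\vw^k$ with $R_{\mu h}:=2\prox_{\mu h(\cdot)}-\vI$, the correct correspondence is $\tilde\vw^k=+\vw^k/\lambda$, not $-\vw^k/\lambda$; this is precisely the relation $\vw_2^k=\vw_1^k/\lambda$ used in the paper. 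Indeed, Moreau's identity gives $R_{\lambda^{-1}f^*}(\vw/\lambda)=-\lambda^{-1}R_{\lambda f}(\vw)$, and the reflection of a reversed function obeys $R_{\mu h(-\cdot)}(\vx)=-R_{\mu h}(-\vx)$, so
\begin{align*}
R_{\lambda^{-1}(g\circ\vA)^*(-\cdot)}\bigl(-\lambda^{-1}R_{\lambda f}(\vw)\bigr)
=-R_{\lambda^{-1}(g\circ\vA)^*}\bigl(\lambda^{-1}R_{\lambda f}(\vw)\bigr)
=\lambda^{-1}R_{\lambda (g\circ\vA)}\bigl(R_{\lambda f}(\vw)\bigr).
\end{align*}
That is, the minus sign produced by the first Moreau step is absorbed by the $(-\cdot)$ already sitting inside $(g\circ\vA)^*(-\cdot)$ together with a second Moreau step; no sign is left over to be carried by the state variable, and $\tilde\vw^k=\vw^k/\lambda$ propagates to $\tilde\vw^{k+1}=\vw^{k+1}/\lambda$.

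If instead you start from $\tilde\vw^k=-\vw^k/\lambda$, the inner reflection is evaluated at $-\vw^k/\lambda$, and Moreau only relates this to $R_{\lambda f}(-\vw^k)$; since $R_{\lambda f}(-\vw)\neq-R_{\lambda f}(\vw)$ in general (equality would require $f$ to be even), the very first step of your induction fails, so the claimed equivalence does not follow as stated. Aside from this sign error, the rest of the plan matches the paper: two applications of Moreau per iteration, induction on $k$, and reduction of the second claim to the first by replacing $(f,\,g\circ\vA)$ with $\bigl((f^*\circ\vA^*)^*,\,g\bigr)$ and $\vA$ with $\vI$, whereupon the dual becomes~\eqref{for:D1-DRS} because $\bigl((f^*\circ\vA^*)^*\bigr)^*=f^*\circ\vA^*$; note that Lemma~\ref{lemma:Equivalency} (equality of solution sets) is not actually needed for that reduction, only the identification of the dual problem is.
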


Before proving this theorem, we introduce a lemma, which was also given in~\cite[Proposition~3.34]{eckstein1989splitting}. Here, we prove it in a different way using the generalized Moreau decomposition.
\begin{lemma}\label{lemma:2}
For $\lambda>0$, we have
\begin{align}\label{for:PD-lemma}
\lambda^{-1}(2\prox_{\lambda F(\cdot)}-\vI)\vw = (\vI-2\prox_{\lambda^{-1}F^*(\cdot)})(\vw/\lambda)= (2\prox_{\lambda^{-1}F^*(-\cdot)}-\vI)(-\vw/\lambda).
\end{align}
\end{lemma}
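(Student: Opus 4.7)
The plan is to derive both equalities from the generalized (scaled) Moreau decomposition
\begin{align*}
\vw = \prox_{\lambda F(\cdot)}(\vw) + \lambda\,\prox_{\lambda^{-1} F^*(\cdot)}(\vw/\lambda),
\end{align*}
which is the natural extension of $\prox_h + \prox_{h^*}=\vI$ to an arbitrary scaling $\lambda>0$ and plays the role suggested by the phrase ``generalized Moreau decomposition'' in the lemma's preamble.

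For the first equality, I would simply rearrange the decomposition above to obtain $\prox_{\lambda F(\cdot)}(\vw) = \vw - \lambda\,\prox_{\lambda^{-1}F^*(\cdot)}(\vw/\lambda)$, then multiply by $2$, subtract $\vw$, and divide by $\lambda$ to arrive at
\begin{align*}
\lambda^{-1}\bigl(2\prox_{\lambda F(\cdot)} - \vI\bigr)\vw \;=\; (\vw/\lambda) - 2\,\prox_{\lambda^{-1} F^*(\cdot)}(\vw/\lambda) \;=\; (\vI - 2\prox_{\lambda^{-1}F^*(\cdot)})(\vw/\lambda).
\end{align*}

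For the second equality, the key observation is a reflection identity for the prox of $F^*(-\cdot)$. Making the substitution $\vz = -\vy$ inside the defining minimization
\begin{align*}
\prox_{\lambda^{-1} F^*(-\cdot)}(\vu) = \argmin_{\vy}\Bigl\{ F^*(-\vy) + \tfrac{\lambda}{2}\|\vy-\vu\|_2^2 \Bigr\}
\end{align*}
yields $\prox_{\lambda^{-1} F^*(-\cdot)}(\vu) = -\,\prox_{\lambda^{-1} F^*(\cdot)}(-\vu)$. Evaluating this identity at $\vu = -\vw/\lambda$ and plugging into $(2\prox_{\lambda^{-1}F^*(-\cdot)} - \vI)(-\vw/\lambda)$ converts the right-hand expression into $-2\,\prox_{\lambda^{-1}F^*(\cdot)}(\vw/\lambda) + \vw/\lambda$, which matches the middle expression already derived.

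The only non-routine step is justifying the scaled Moreau decomposition itself; this follows from the standard Moreau identity applied to the function $\lambda F$ together with the conjugation rule $(\lambda F)^*(\vw) = \lambda F^*(\vw/\lambda)$, so no real obstacle is expected. The whole argument reduces to bookkeeping with the Moreau identity and a single change of variables to handle the $F^*(-\cdot)$ reflection.
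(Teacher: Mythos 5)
Your proposal is correct and follows essentially the same route as the paper: both derive the first equality by rearranging the generalized Moreau decomposition $\vw = \prox_{\lambda F(\cdot)}(\vw) + \lambda\prox_{\lambda^{-1}F^*(\cdot)}(\vw/\lambda)$ and the second from the odd-symmetry identity $\prox_{\lambda^{-1}F^*(-\cdot)}(-\vw/\lambda) = -\prox_{\lambda^{-1}F^*(\cdot)}(\vw/\lambda)$. The only cosmetic difference is that you sketch the decomposition's derivation from the standard Moreau identity applied to $\lambda F$, whereas the paper cites it from the literature.
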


\begin{proof}
We prove it using the generalized Moreau decomposition~\cite[Theorem 2.3.1]{esser_primal_2010}
\begin{align}\label{for:Moreau-decomp}
\vw = \prox_{\lambda F(\cdot)}(\vw) + \lambda \prox_{\lambda^{-1}F^*(\cdot)}(\vw/\lambda).
\end{align}
Using the generalized Moreau decomposition, we have
\begin{align*}
\lambda^{-1}(2\prox_{\lambda F(\cdot)}-\vI)\vw =&2\lambda^{-1}\prox_{\lambda F(\cdot)}(\vw)-\vw/\lambda\overset{\eqref{for:Moreau-decomp}}{=}2\lambda^{-1}(\vw-\lambda\prox_{\lambda^{-1}F^*(\cdot)}(\vw/\lambda))-\vw/\lambda\\
=&\vw/\lambda-2\prox_{\lambda^{-1}F^*(\cdot)}(\vw/\lambda) = (\vI-2\prox_{\lambda^{-1}F^*(\cdot)})(\vw/\lambda).
\end{align*}
The last equality of~\eqref{for:PD-lemma} comes from
\begin{align*}
\prox_{\lambda^{-1}F^*(-\cdot)}(-\vw/\lambda) = -\prox_{\lambda^{-1}F^*(\cdot)}(\vw/\lambda).
\end{align*}
\end{proof}

\begin{proof}[Proof of Theorem~\ref{thm:DRS-equal}]
We will prove only the equivalence of RPRS on~\eqref{for:P1-DRS} and~\eqref{for:D2-DRS}. The proof for the other equivalence is the same. The RPRS on~\eqref{for:P1-DRS} and~\eqref{for:D2-DRS} can be formulated as
\begin{align}\label{for:P-DRS}
\vw_1^{k+1}&=(1-\alpha) \vw_1^k+\alpha(2\prox_{\lambda g\circ \vA(\cdot)}-\vI)(2\prox_{\lambda f(\cdot)}-\vI)\vw_1^k,
\end{align}
and
\begin{align}\label{for:D-DRS}
\vw_2^{k+1}&=(1-\alpha) \vw_2^k+\alpha(2\prox_{\lambda^{-1}(g\circ \vA)^*(-\cdot)}-\vI)(2\prox_{\lambda^{-1}f^*(\cdot)}-\vI)\vw_2^k,
\end{align}
respectively. In addition, we can recover the variables $\vx^k$ (or $\vv^k$) from $\vw_1^k$ (or $\vw_2^k$) using the following forms:
\begin{align}
\vx^{k+1} & = \prox_{\lambda f(\cdot)} \vw_1^{k},\\
\vv^{k+1} & = \prox_{\lambda^{-1}f^*(\cdot)} \vw_2^{k}.
\end{align}
Proof by induction. Suppose $\vw_2^k = \vw_1^k/\lambda$ holds. We next show that $\vw_2^{k+1} = \vw_1^{k+1}/\lambda$.
\begin{align*}
\vw_2^{k+1}=&(1-\alpha) \vw_1^k/\lambda+\alpha(2\prox_{\lambda^{-1}(g\circ \vA)^*(-\cdot)}-\vI)(2\prox_{\lambda^{-1}f^*(\cdot)}-\vI)(\vw_1^k/\lambda)\\
\overset{\eqref{for:PD-lemma}}{=}&(1-\alpha) \vw_1^k/\lambda+\alpha(2\prox_{\lambda^{-1}(g\circ \vA)^*(-\cdot)}-\vI)(-\lambda^{-1}(2\prox_{\lambda f(\cdot)}-\vI)\vw_1^k)\\
\overset{\eqref{for:PD-lemma}}{=}&(1-\alpha) \vw_1^k/\lambda+\alpha\lambda^{-1}(2\prox_{\lambda(g\circ \vA)(-\cdot)}-\vI)(2\prox_{\lambda f(\cdot)}-\vI)\vw_1^k\\
=&\lambda^{-1}[(1-\alpha) \vw_1^k+\alpha(2\prox_{\lambda(g\circ \vA)(\cdot)}-\vI)(2\prox_{\lambda f(\cdot)}-\vI)\vw_1^k]\\
=&\vw_1^{k+1}/\lambda.
\end{align*}
In addition we have
\begin{align*}
\vx^{k+1} +\lambda\vv^{k+1}=&\prox_{\lambda f(\cdot)} \vw_1^{k}+\lambda\prox_{\lambda^{-1}f^*(\cdot)} \vw_2^{k}\\
=&\prox_{\lambda f(\cdot)} \vw_1^{k}+\lambda\prox_{\lambda^{-1}f^*(\cdot)} (\vw_1^{k}/\lambda)=\vw_1^{k}.
\end{align*}
\end{proof}

\begin{remark}Eckstein showed in~\cite[Chapter~3.5]{eckstein1989splitting} that DRS/PRS on~\eqref{for:P1-DRS} is equivalent to DRS/PRS on~\eqref{for:D1-DRS} when $\vA=\vI$. This special case can be obtained from this theorem immediately because when $\vA=\vI$, \eqref{for:D1-DRS} is exactly the same as~\eqref{for:D2-DRS} and we have
\begin{align*}
\boxed{\textnormal{DRS/PRS on~\eqref{for:P1-DRS}}}\Longleftrightarrow \boxed{\textnormal{DRS/PRS on~\eqref{for:D2-DRS}}}\Longleftrightarrow \boxed{\textnormal{DRS/PRS on~\eqref{for:D1-DRS}}}\Longleftrightarrow\boxed{\textnormal{DRS/PRS on~\eqref{for:P2-DRS}}}.
\end{align*}
\end{remark}

\begin{remark}In order to make sure that RPRS on the primal and dual problems are equivalent, the initial conditions and parameters have to satisfy conditions described in the proof of Theorem~\ref{thm:DRS-equal}. We need the initial condition to satisfy $\vw_2^0=\vw_1^0/\lambda$ and the parameter for RPRS on the dual problem has to be chosen as $\lambda^{-1}$, see the differences in~\eqref{for:P-DRS} and~\eqref{for:D-DRS}.
\end{remark}

\section{Application: total variation image denoising}
\label{sec:app1}

ADM (or split Bregman~\cite{SplitBregman}) has been applied on many image processing applications, and we apply the previous equivalence results of ADM to derive several equivalent algorithms for total variation denoising.

The total variation (ROF model~\cite{ROF}) applied on image denoising is
\begin{align*}
\Min_{x\in BV(\Omega)} \int_\Omega |D x|+ {\alpha\over2}\| x-b\|_2^2
\end{align*}
where $x$ stands for an image, and $BV(\Omega)$ is the set of all bounded variation functions on $\Omega$. The first term is known as the total variation of $x$, minimizing which tends to yield a piece-wise constant solution. The discrete version is as follows:
\begin{align*}
\Min_{\vx}  \|\nabla \vx\|_{2,1}+ {\alpha\over2}\| \vx-\vb\|_2^2.
\end{align*}
Without loss of generality, we consider the two-dimensional image $\vx$, and the discrete total variation $\|\nabla\vx\|_{2,1}$ of image $\vx$ is defined as
\begin{align*}
\|\nabla\vx\|_{2,1}=\sum_{ij}|(\nabla \vx)_{ij}|,
\end{align*}
where $|\cdot|$ is the 2-norm of a vector. The equivalent ADM-ready form~\cite[Equation (3.1)]{SplitBregman} is
\begin{align}\label{TVD-P}
\Min_{\vx,\vy}  \ \|\vy\|_{2,1}+ {\alpha\over2}\| \vx-\vb\|_2^2\qquad\st \ \vy-\nabla \vx=\vzero,
\end{align}
and its dual problem in ADM-ready form~\cite[Equation (8)]{Chambolle} is
\begin{align}\label{TVD-D}
\Min_{\vv,\vu}  \ {1\over2\alpha}\| \textnormal{div }\vu+\alpha\vb\|_2^2+\iota_{\{\vv:{\|\vv\|_{2,\infty}\leq1}\}}(\vv)\qquad \st \ \vu-\vv=\vzero,
\end{align}
where $\|\vv\|_{2,\infty}=\max\limits_{ij}|(\vv)_{ij}|$. In addition, the equivalent saddle-point problem is
\begin{align}\label{TVD-PD}
\Min_{\vx}\Max_{\vv}\quad  {1\over2\alpha}\|\vx-\vb\|_2^2+\langle \vv,\nabla \vx\rangle-\iota_{\{\vv:{\|\vv\|_{2,\infty}\leq1}\}}(\vv).
\end{align}

We list the following equivalent algorithms for solving the total variation image denoising problem. The equivalence result stated in Corollary~\ref{cor:TV} can be obtained from theorems~\ref{thm:ADMM-equal}-\ref{thm:ADMM-equal2}.
\begin{enumerate}
\item Algorithm~\ref{alg:ADM-1} (primal ADM) on~\eqref{TVD-P} is
\begin{subequations}\label{eqn:TV-P}
\begin{align}
\vx_1^{k+1} = &\argmin_{\vx}{\alpha\over2}\|\vx-\vb\|_2^2+(2\lambda)^{-1}\|\nabla\vx-\vy_1^k+\lambda\vz_1^k\|_2^2,\\
\vy_1^{k+1} = &\argmin_{\vy}\|\vy\|_{2,1}+(2\lambda)^{-1}\|\nabla\vx_1^{k+1}-\vy+\lambda\vz_1^k\|_2^2,\\
\vz_1^{k+1}=&\vz_1^k+\lambda^{-1}(\nabla\vx_1^{k+1}-\vy_1^{k+1}).
\end{align}
\end{subequations}

\item Algorithm~\ref{alg:ADM-3} (dual ADM) on~\eqref{TVD-D} is
\begin{subequations}\label{eqn:TV-D}
\begin{align}
\vu_3^{k+1} = &\argmin_{\vu}{{1\over2\alpha}\|\textnormal{div }\vu+\alpha\vb\|_2^2+{\lambda\over2}\|\vv_3^k-\vu+\lambda^{-1}\vz_3^k\|_2^2},\\
\vv_3^{k+1} = &\argmin_{\vv}\iota_{\{\vv:\|\vv\|_{2,\infty}\leq1\}}(\vv)+{\lambda\over 2}\|\vv-\vu_3^{k+1}+\lambda^{-1}\vz_3^k\|_2^2,\\
\vz_3^{k+1} = & \vz_3^k+\lambda(\vv_3^{k+1}-\vu_3^{k+1}).
\end{align}
\end{subequations}

\item Algorithm~\ref{alg:primal-dual} (primal-dual) on~\eqref{TVD-PD} is
\begin{subequations}\label{eqn:TV-PD}
\begin{align}
\bar\vv_4^{k} = & 2\vv_4^{k}-\vv_4^{k-1},\\
\vx_4^{k+1} = &\argmin_{\vx}{\alpha\over2}\|\vx-\vb\|_2^2+(2\lambda)^{-1}\|\nabla\vx-\nabla \vx_4^k+\lambda\bar\vv_4^k\|_2^2,\\
\vv_4^{k+1} = &\argmin_{\vv}\iota_{\{\vv:\|\vv\|_{2,\infty}\leq1\}}(\vv)-\langle\vv,\nabla\vx_4^{k+1}\rangle+{\lambda\over 2}\|\vv-\vv^k\|_2^2.
\end{align}
\end{subequations}

\item Algorithm~\ref{alg:ADM-4} (primal ADM with order swapped) on~\eqref{TVD-P} is
\begin{subequations}\label{eqn:TV-P2}
\begin{align}
\vy_5^{k+1} = &\argmin_{\vy}\|\vy\|_{2,1}+(2\lambda)^{-1}\|\nabla\vx_5^{k}-\vy+\lambda\vz_5^k\|_2^2,\\
\vx_5^{k+1} = &\argmin_{\vx}{\alpha\over2}\|\vx-\vb\|_2^2+(2\lambda)^{-1}\|\nabla\vx-\vy_5^{k+1}+\lambda\vz_5^k\|_2^2,\\
\vz_5^{k+1}=&\vz_5^k+\lambda^{-1}(\nabla\vx_5^{k+1}-\vy_5^{k+1}).
\end{align}
\end{subequations}
\end{enumerate}

\begin{corollary}\label{cor:TV}
Let $\vx_5^0=\vb+\alpha^{-1}\textnormal{div }\vz_5^0$. If the initialization for all algorithms~\eqref{eqn:TV-P}-\eqref{eqn:TV-P2} satisfies $\vy_1^0 =  - \vz_3^0  =\nabla \vx_4^0-\lambda (\vv_4^0-\vv_4^{-1}) =\vy_5^{1}$ and $\vz_1^0 = \vv_3^0 =\vv_4^0 = \vz_5^0+\lambda^{-1}(\nabla \vx_5^0-\vy_5^{1})$. Then for $k\geq1$, we have the following equivalence results between the iterations of the four algorithms:
\begin{align*}
\begin{array}{llll}
\vy_1^k &=  - \vz_3^k  &=\nabla \vx_4^k-\lambda (\vv_4^k-\vv_4^{k-1}) &=\vy_5^{k+1},\\
\vz_1^k &= \vv_3^k &=\vv_4^k &= \vz_5^k+\lambda^{-1}(\nabla \vx_5^k-\vy_5^{k+1}).
\end{array}
\end{align*}
\end{corollary}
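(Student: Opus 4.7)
The plan is to derive Corollary~\ref{cor:TV} by pairwise invocation of Theorems~\ref{thm:ADMM-equal},~\ref{thm:ADM-PD}, and~\ref{thm:ADMM-equal2}. First I would cast~\eqref{TVD-P} as an instance of~\eqref{for:P3-ADMM} through the dictionary: the first and second primal blocks of~\eqref{for:P3-ADMM} play the roles of TV's $\vy$ and $\vx$, respectively, with $f=\|\cdot\|_{2,1}$, $g=\tfrac{\alpha}{2}\|\cdot-\vb\|_2^2$, $\vA=-\vI$, $\vB=\nabla$, and $\vb_{P1}=\vzero$. A line-by-line check then shows that Algorithm~\ref{alg:ADM-1} collapses to~\eqref{eqn:TV-P}, Algorithm~\ref{alg:ADM-4} collapses to~\eqref{eqn:TV-P2}, and Algorithm~\ref{alg:primal-dual} collapses to~\eqref{eqn:TV-PD}. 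The ADM-ready dual~\eqref{for:D3-ADMM} of this instance reduces, after the relabeling $\vu_{P1}\leftrightarrow\vv_{TV}$ and $\vv_{P1}\leftrightarrow\vu_{TV}$ forced by the asymmetric appearance of $F^*$ (indicator) and $G^*$ (quadratic), to exactly~\eqref{TVD-D}, so Algorithm~\ref{alg:ADM-3} collapses to~\eqref{eqn:TV-D}.

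With the dictionary in place, Theorem~\ref{thm:ADMM-equal} yields the equivalence between~\eqref{eqn:TV-P} and~\eqref{eqn:TV-D}, Theorem~\ref{thm:ADM-PD} yields the equivalence between~\eqref{eqn:TV-P} and~\eqref{eqn:TV-PD}, and Theorem~\ref{thm:ADMM-equal2} yields the equivalence between~\eqref{eqn:TV-P} and~\eqref{eqn:TV-P2}. The third invocation requires that $\prox_{\lambda G}$ be affine, where $G(\vt)=\min\{\tfrac{\alpha}{2}\|\vx-\vb\|_2^2:\nabla\vx=\vt\}$. Since $g$ is convex quadratic on the whole space and $\dom(G)=\mathrm{range}(\nabla)$ is an intersection of hyperplanes, Part~5 of the Proposition in Section~\ref{sec:ADM-equal2} guarantees that $G$ is convex quadratic on its domain and hence $\prox_{\lambda G}$ is affine.

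Finally, the initial conditions listed in the corollary must be matched, under the dictionary, to those required by each theorem. The identity $\vy_1^0=-\vz_3^0$ is precisely Theorem~\ref{thm:ADMM-equal}'s $\vA\vx_1^0=\vz_3^0$, since $\vA\vx_1^0=-\vy_1^0$ under our dictionary; $\vy_1^0=\nabla\vx_4^0-\lambda(\vv_4^0-\vv_4^{-1})$ is Theorem~\ref{thm:ADM-PD}'s $\vA\vx_1^0=\lambda(\vu_4^0-\vu_4^{-1})+\vb-\vB\vy_4^0$ after substitution; and the compatibility condition $-\vz_5^0\in\partial G(\vB\vy_5^0-\vb)$ in Theorem~\ref{thm:ADMM-equal2} translates, by the envelope formula for the value function of the constrained quadratic, to the stated assumption $\vx_5^0=\vb+\alpha^{-1}\textnormal{div}\,\vz_5^0$. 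The chained equalities of the corollary then follow by composing the three change-of-variable formulas. The main obstacle is purely bookkeeping: sign conventions in~\eqref{for:D3-ADMM}, the $\vu$--$\vv$ swap in~\eqref{TVD-D}, and the one-step index offset $\vx_1^k=\vx_5^{k+1}$ from Theorem~\ref{thm:ADMM-equal2} must all be composed consistently; no new technical ingredient beyond these substitutions is required.
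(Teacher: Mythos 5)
Your proposal is correct and follows essentially the same route as the paper, which states that Corollary~\ref{cor:TV} "can be obtained from theorems~\ref{thm:ADMM-equal}--\ref{thm:ADMM-equal2}" and leaves exactly the specialization you carry out (dictionary $f=\|\cdot\|_{2,1}$, $g=\tfrac{\alpha}{2}\|\cdot-\vb\|_2^2$, $\vA=-\vI$, $\vB=\nabla$, $\vb=\vzero$, the $\vu$--$\vv$ relabeling in the dual, the affine-prox check for $G$, and the translation of the initial conditions) to the reader. Your bookkeeping, including the index offset $\vy_1^k=\vy_5^{k+1}$ and the sufficiency of $\vx_5^0=\vb+\alpha^{-1}\textnormal{div}\,\vz_5^0$ for $-\vz_5^0\in\partial G(\nabla\vx_5^0)$, checks out.
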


\begin{remark} In any of the four algorithms, the $\nabla$ or $\textnormal{div}$ operator is separated in a different subproblem from the term $\|\cdot\|_{2,1}$ or its dual norm $\|\cdot\|_{2,\infty}$. The $\nabla$ or $\textnormal{div }$ operator is translation invariant so their subproblems can be solved by a diagonalization trick~\cite{FTVd}. The subproblems involving the term $\|\cdot\|_{2,1}$ or the indicator function $\iota_{\{\vv:\|\vv\|_{2,\infty}\leq1\}}$ have closed-form solutions. Therefore, in addition to the equivalence results, all the four algorithms have essentially the same per-iteration costs.
\end{remark}

\section*{Acknowledgments}
This work is supported by NSF Grants DMS-1349855 and DMS-1317602 and ARO MURI Grant W911NF-09-1-0383. We thank Jonathan Eckstein for bringing his early work~\cite[Chapter~3.5]{eckstein1989splitting} and~\cite{eckstein1994some} to our attention.

\bibliographystyle{siam}
\bibliography{PDequal,admm}
\end{document}